\renewcommand{\leq}{\leqslant}
\renewcommand{\geq}{\geqslant}
\renewcommand{\gamma}{\upgamma}
\renewcommand{\pi}{\uppi}
\newcommand{\e}{\varepsilon}
\newcommand{\R}{\mathbb R}
\newtheorem{theorem}{Theorem}
\newtheorem{lemma}[theorem]{Lemma}
\newtheorem{proposition}[theorem]{Proposition}
\theoremstyle{remark}
\newtheorem{remark}[theorem]{Remark}
\newcounter{quest}
\newtheorem{question}[quest]{Question}
\renewcommand{\tau}{\uptau}
\renewcommand{\xi}{\upxi}
\renewcommand{\rho}{\uprho}
\newcommand{\N}{\mathbb N}
\newcommand{\eqdef}{\stackrel{\mathrm{def}}{=}}
\renewcommand{\theta}{\uptheta}
\renewcommand{\lambda}{\uplambda}
\renewcommand{\gamma}{\upgamma}
\renewcommand{\beta}{\upbeta}
\renewcommand{\alpha}{\upalpha}
\renewcommand{\kappa}{\upkappa}
\renewcommand{\psi}{\uppsi}
\renewcommand{\rho}{\uprho}
\renewcommand{\delta}{\updelta}
\renewcommand{\pi}{\uppi}
\renewcommand{\omega}{\upomega}
\renewcommand{\sigma}{\upsigma}
\renewcommand{\eta}{\upeta}
\renewcommand{\kappa}{\upkappa}
\renewcommand{\mu}{\upmu}
\renewcommand{\nu}{\upnu}
\renewcommand{\pi}{\uppi}
\renewcommand{\zeta}{\upzeta}
\newcommand{\mb}{\mathbb}
\newcommand{\ms}{\mathscr}
\newcommand{\msf}{\mathsf}
\newcommand{\mr}{\mathrm}
\begin{document}

\title{Cutoff for the Bernoulli--Laplace urn model with $o(n)$ swaps}
\author{Alexandros Eskenazis and Evita Nestoridi}
\address{Mathematics Department, Princeton University, Princeton, NJ 08544-1000, USA}
\thanks{{\it E-mail addresses:} \href{mailto:ae3@math.princeton.edu}{\nolinkurl{ae3@math.princeton.edu}}, \href{mailto:exn@princeton.edu}{\nolinkurl{exn@princeton.edu}}.
}
\thanks{This work was completed while the first named author was in residence at the Institute for Pure \& Applied Mathematics at UCLA for the long program on Quantitative Linear Algebra. He would like to thank the organizers of the program for the excellent working conditions. He was also supported in part by the Simons Foundation.}



\begin{abstract}
We study the mixing time of the $(n,k)$ Bernoulli--Laplace urn model, where $k\in\{0,1,\ldots,n\}$. Consider two urns, each containing $n$ balls, so that when combined they have precisely $n$ red balls and $n$ white balls. At each step of the process choose uniformly at random $k$ balls from the left urn and $k$ balls from the right urn and switch them simultaneously. We show that if $k=o(n)$, this Markov chain exhibits mixing time cutoff at $\frac{n}{4k}\log n$ and window of the order $\frac{n}{k}\log\log n$. This is an extension of a classical theorem of Diaconis and Shahshahani who treated the case $k=1$.
\end{abstract}

\maketitle

{\footnotesize
\noindent {\em 2010 Mathematics Subject Classification.} Primary: 60J10; Secondary: 60C05, 60G42.

\noindent {\em Key words and phrases.} Markov chain, mixing time, Bernoulli--Laplace urn model, cutoff phenomenon.}

\section{Introduction}
Mixing time of card shuffling schemes and combinatorial urn models is a widely studied subject in the discrete probability literature. Recently, there has been a lot of interest for shuffling large decks of cards and shuffling models that are actually used in real life (see, for instance, \cite{BayerDiac}, \cite{AssafDiaSound}, \cite{NW} and \cite{DiaPal}). This paper focuses on a specific shuffle of a deck of cards that is practiced by casinos (see \cite{NW}) and can be described as follows. Fix $n\in\N$ and $k\in\{0,1,\ldots,n\}$. Initially, we have an unshuffled deck of $2n$ cards. At each step of the process, we cut the deck in two piles of equal size, shuffle each pile independently and perfectly and then reassemble the deck. Finally, we move the top $k$ cards of the reassembled deck to the bottom and repeat the process.

As explained in \cite{NW}, this card shuffling scheme is in one-to-one correspondence with the classical Bernoulli--Laplace urn model with parameters $(n,k)$, which is defined as follows. Initially, we have two urns, each containing $n$ balls, so that when combined they have precisely $n$ red balls and $n$ white balls. At each step of the process, we pick $k$ balls from each urn uniformly at random and switch them simultaneously. 

Let $X_t$ denote the number of red balls in the left urn at time $t$. For $x, y \in \mathcal{X}= \{0,\ldots,n\}$, let $P^t_{x}(y)$ be the probability of having $y$ red balls in the left urn after $t$ steps, given that the left urn initially contained $x$ red balls, that is
\begin{equation}
P_x^t(y) = \mb{P} \big( X_t = y \ \big| \ X_0=x\big).
\end{equation}
It has been proven by Ta\"\i bi \cite{Taibi} that the sequence of probability measures $P^t_{x}$ converges to the hypergeometric distribution
\begin{equation} \label{eq:pin}
\pi_n(j)= \frac{{n \choose j} {n \choose n-j}}{ {2n \choose n}},\quad 0 \leq j \leq n
\end{equation}
as $t \rightarrow \infty$, with respect to the total variation distance
\begin{equation}
d(t)\eqdef \max_{x\in\mathcal{X}} \|P^t_{x}-\pi_n\|_{\mr{TV}} = \frac{1}{2} \max_{x\in\mathcal{X}} \sum_{y \in \mathcal{X}} \vert P^t_{x}(y) - \pi_n(y) \vert.
\end{equation}
A question which arises naturally, is to determine the rate of convergence to stationarity of the above process, which is quantified by the mixing time
\begin{equation}
t_{\mr{mix}}(\varepsilon)= \min \big\{t \in \mathbb{N}: \max_{x \in \mathcal{X}} \|P^t_x-\pi_n\|_{\mr{TV}}\leq \varepsilon\big\}.
\end{equation}
The mixing time of the Bernoulli--Laplace model was first studied by Diaconis and Shahshahani in \cite{Bernoulli-Laplace}, who proved that for $k=1$ the chain exhibits cutoff at $\frac{n}{4} \log n$ with window $n$. Afterwards, Donnelly, Floyd and Sudbury \cite{DLS} showed that the separation distance mixing time of the Bernoulli--Laplace urn model for $k=1$ exhibits cutoff as well and Belsley \cite{Eric} proved cutoff for the mixing time of distance regular graphs, which is a generalization of \cite{Bernoulli-Laplace}. The mixing time of the $(n,k)$ Bernoulli-Laplace Markov chain for $k>1$ was first studied by Nestoridi and  White \cite{NW}, who showed that when $k=o(n)$, $\frac{n}{4k} \log n - O\big(\frac{n}{k}\big)$ steps are necessary for the chain to approach stationarity, while $\frac{n}{2k} \log n$ steps are sufficient, in particular
\begin{equation} \label{eq:nestowhite}
\frac{n}{4k}\log n - \frac{c(\e)n}{k} \leq t_{\mr{mix}}(\e) \leq \frac{n}{2k} \log \Big(\frac{n}{\e}\Big).
\end{equation}
They also proved that if $\frac{n}{2} - \log_6 n\leq k
\leq  \frac{n}{2}$ then the chain mixes in a finite number of steps.

The main result of the present article provides the sharp upper bound for the mixing time of the $(n,k)$ Bernoulli--Laplace urn model when $k=o(n)$, thus bridging the gap \eqref{eq:nestowhite} from \cite{NW}.
\begin{theorem}\label{main}
The mixing time of the $(n,k)$ Bernoulli--Laplace urn model with $k=o(n)$ satisfies
\begin{equation} 
t_\mr{mix}(\e) \leq \frac{n}{4k}\log n + \frac{3n}{k}\log\log n + O\Big(\frac{n}{\e^4 k}\Big)
\end{equation}
for every $\e\in(0,1)$.
\end{theorem}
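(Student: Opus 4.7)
My approach would be the spectral $\ell^2$-method, directly generalizing the Diaconis--Shahshahani analysis of the case $k=1$. The key structural fact is that the $(n,k)$ Bernoulli--Laplace chain is reversible with respect to $\pi_n$ and its transition operator $P$ commutes with the symmetric group action on the balls, so the eigenfunctions $\{\phi_j\}_{j=0}^n$ in $L^2(\pi_n)$ can be taken to be the normalized dual Hahn polynomials of degrees $j=0,1,\ldots,n$. The first step is thus to compute the eigenvalues $1 = \lambda_0 > \lambda_1 > \cdots > \lambda_n$ in closed form, which should be a routine calculation since the chain acts on polynomials of fixed degree as multiplication by a scalar determined by a ratio of falling factorials. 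In particular, the spectral gap should satisfy $1 - \lambda_1 = \frac{2k}{n}(1+o(1))$, which is what produces the leading term $\frac{n}{4k}\log n$ in Theorem~\ref{main}.

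With the spectrum in hand, I would apply the usual $\ell^2$-bound
\begin{equation*}
4\|P^t_x - \pi_n\|_{\mr{TV}}^2 \leq \sum_{j=1}^n \lambda_j^{2t}\phi_j(x)^2.
\end{equation*}
Because the chain is monotone in the initial state and invariant under the reflection $x \mapsto n - x$, the right-hand side is maximized at the extremal state $x = 0$, so it suffices to control $\sum_{j\ge 1}\lambda_j^{2t}\phi_j(0)^2$. Using the standard dual Hahn polynomial identities one should have $\phi_j(0)^2 \lesssim \binom{n}{j}$, so inserting $t = \frac{n}{4k}\log n + \frac{3n}{k}\log\log n + Cn/(\e^4 k)$ and the expansion $\lambda_j^{2t} \lesssim \exp(-4jkt/n + O(j^2k^2 t/n^2))$ reduces the problem to a purely analytic estimate on a sum of the form $\sum_j \exp(-4jkt/n)\binom{n}{j}$. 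Splitting this sum at a threshold of order $\log\log n$ and bounding the two pieces separately (geometric decay for small $j$, where $j!$ dominates $n^{-j}\binom{n}{j}$; and a crude uniform bound for large $j$, where $\lambda_j^{2t}$ is already superpolynomially small) should give the total variation bound $\e$.

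The main technical obstacle is twofold. First, I would need sharp upper bounds on the eigenvalues $\lambda_j$ that remain valid for $j$ growing with $n$: the expansion $\lambda_j \approx (1-2k/n)^j$ is straightforward for $j = O(1)$ but becomes delicate when $j$ is comparable to $n/k$, where the quadratic correction in $j^2k^2/n^2$ is no longer negligible. Second, the extremal values $\phi_j(0)^2$ of the orthonormal dual Hahn polynomials must be controlled uniformly in $j$ and meshed with the eigenvalue estimates so that the tail of the sum converges. The extra $\log\log n$ term in the window is, as far as I can see, precisely the slack needed so that the contribution of the moderate-$j$ terms in the $\ell^2$ sum is absorbed below $\e^2/4$, and is presumably an artifact of the $\ell^2$-method rather than of the true mixing window.
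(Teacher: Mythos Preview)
Your proposal takes a route the paper explicitly rejects. The eigenvalues and eigenvectors of the $(n,k)$ chain are indeed known in closed form: the eigenfunctions are the dual Hahn polynomials $f_j$, and the eigenvalues are their values at $k$, i.e.\ $\lambda_j = f_j(k)$. However, unlike the case $k=1$, for general $k$ these values do \emph{not} admit the simple approximation $\lambda_j \approx (1-2k/n)^j$ that your sketch relies on. Khare and Zhou computed these eigenvalues explicitly but could not extract mixing-time bounds from them; the paper states directly that ``an efficient estimation, in the spirit of [Diaconis--Shahshahani], of the spectral bound for the total variation distance seems intractable.'' The step you flag as the ``main technical obstacle''---sharp control of $\lambda_j$ for $j$ growing with $n$---is therefore not a technicality to be filled in later but the actual obstruction, and your proposal contains no idea for overcoming it. Absent such an idea, the $\ell^2$ sum cannot be bounded and the argument does not close.

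The paper's proof avoids the higher eigenvalues entirely. It uses only $\lambda_1$ and $\lambda_2$ (to control the mean and variance of $X_t$ and to run a martingale argument via Doob's maximal inequality), and the rest is probabilistic: two copies $(X_t)$ and $(Y_t)$ with $Y_0\sim\pi_n$ are shown to come within $O(\sqrt{n})$ of each other by time $\tfrac{n}{4k}\log n$; then a supermartingale hitting-time lemma applied to a decomposition of each step into $2k$ single-ball moves brings them to distance $O(\sqrt{k\log n})$ in $O(n/k)$ further steps; a monotone path-coupling contracts this to $o(\sqrt{k})$ in $\tfrac{3n}{k}\log\log n$ steps; and finally a Diaconis--Freedman comparison of hypergeometric and binomial sampling shows that once $|X_t-Y_t|=o(\sqrt{k})$ and both chains are near $n/2$, a single additional step makes the total variation distance $o(1)$. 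The $\log\log n$ in the window comes from this last coupling contraction, not from any $\ell^2$ tail.
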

In combination with the sharp lower bound \eqref{eq:nestowhite} from \cite{NW}, we deduce that the $(n,k)$ Bernoulli--Laplace urn model with $k=o(n)$ exhibits cutoff at $\frac{n}{4k} \log n$ with window $\frac{n}{k}\log\log n$, i.e. that
\begin{equation*}
\lim_{c \rightarrow \infty} \lim_{n \rightarrow \infty} d \left(\frac{n}{4k} \log n -  \frac{cn}{k}\log\log n \right)= 1 \mbox{ and } \lim_{c \rightarrow \infty} \lim_{n \rightarrow \infty} d \left(\frac{n}{4k} \log n +  \frac{cn}{k} \log\log n \right)= 0.
\end{equation*}

The proof of Diaconis and Shahshahani for the upper bound when $k=1$, relies heavily on the spectrum of the transition matrix $P$. In particular, they use an upper bound for the total variation distance in terms of the $\ell_2$ norm of the eigenvalues and eigenvectors of the Markov chain (see also \cite[Section~12.6]{LPW}), which they then compute explicitly using spherical function theory. This spectral approach has proven successful in many variants of the $(n,1)$ Bernoulli--Laplace urn model, including the works \cite{DLS} and \cite{Eric} mentioned above. For instance, Scarabotti \cite{FS} studied the model where we have $m$ urns, each containing $n$ balls, and at each step we choose two balls that belong to different urns and switch them. He proved that this urn model exhibits cutoff at $\frac{1}{4}n (m-1) \log (nm^2)$. It is also worth mentioning that Schoolfield \cite{School} studied another version of the Bernoulli--Laplace Markov chain where at each step, a single ball is chosen from each urn and gets switched, but some balls get signed during the process.

Variants of the $(n,k)$ Bernoulli--Laplace urn model with $k>1$ have also been studied via spectral techniques. Most notably, Khare and Zhou \cite{KZ} (see also \cite{KM2}) analyzed the mixing time of a variety of multivariate urn models and proved cutoff for a simpler asymmetric version of the Bernoulli--Laplace chain, where at each step we pick $k$ balls from the left urn, move them to the right urn, and then we pick $k$ balls from the right urn and move them to the left. It turns out that the eigenvalues of this model are simple enough and thus an $\ell_2$ bound for the total variation distance \`a la Diaconis and Shahshahani yields the sharp upper bound for the mixing time. Moreover, Khare and Zhou found a closed formula for the eigenvalues and eigenvectors of the transition matrix $(P_x^t(y))_{x,y \in \mathcal{X}}$ of the $(n,k)$ Bernoulli--Laplace model that the present paper studies, yet, as those were significantly more complicated, they did not provide any estimates for its mixing time.

\subsection*{Outline of the proof} 
In contrast to the proofs of all the aforementioned results, throughout our proof we only make use of the first two eigenvalues and eigenvectors of our Markov chain. Beyond that, the proof is based on a coupling argument which is organized in 4 steps (each corresponding to a different subsection of Section \ref{sec:3}). Let $(X_t)$ and $(Y_t)$ be two copies of the Bernoulli-Laplace urn model with $k=o(n)$ swaps with $X_0\in\{0,1,\ldots,n\}$ being fixed and $Y_0\sim\pi_n$. Recall that $X_t$ and $Y_t$ denote the number of red balls in the left urn after $t$ steps at each configuration of the chain.

\smallskip

\noindent {\it Step 1.} Using the first two eigenvalues and eigenvectors of the Markov chain (see Subsections \ref{subsec:2.2} and \ref{subsec:2.3} below), we show that at time $t = \frac{n}{4k}\log n$ (which, recall, is the desired mixing time) both $X_t$ and $Y_t$ are of the order $\frac{n}{2} + O(\sqrt{n})$ with high probability. In particular, at this time, $X_t$ and $Y_t$ are with high probability at most $O(\sqrt{n})$ apart.

\smallskip

\noindent {\it Step 2.} Starting at time $\frac{n}{4k}\log n$ we run the two chains independently. A well-known hitting time lemma, (see Subsection \ref{subsec:2.4}) along with a combinatorial decomposition of the Markov chain, assures that with high probability the two configurations will be at distance $O(\sqrt{k\log n})$ away from each other within $O\big(\frac{n}{k}\big)$ steps. We note that to make the analysis possible, it is important to know that both chains will remain at distance, say, $\frac{n}{4}$ from $\frac{n}{2}$ for $O\big(\frac{n}{k}\big)$ additional steps, which is a consequence of Doob's maximal inequality (see Proposition \ref{prop:doob}).

\smallskip

\noindent {\it Step 3.} After the two chains get at distance $O(\sqrt{k\log n})$ from each other, we use a monotone coupling that was introduced in \cite{NW} (see Subsection \ref{subsec:2.1}) which guarantees that with high probability they will reach distance $o(\sqrt{k})$ within $\frac{3n}{k}\log\log n$ steps.

\smallskip

\noindent {\it Step 4.} Finally, we use a classical result of Diaconis and Freedman from \cite{DiaFree} to show that after the two configurations get to distance $o(\sqrt{k})$ from one another, then their total variation distance becomes $o(1)$ after a single step.

\smallskip

We note that when $k=n^{o(1)}$, Theorem \ref{main} can be proven along the lines of a proof of Levin, Luczak and Peres \cite{LLP}, who showed that the mixing time of Glauber dynamics for the mean-field Ising model  exhibits cutoff in the high temperature regime. This is explained in detail in Remarks \ref{rem:weakbounds} and \ref{rem:easycutoff} below. However, a straightforward adaptation of the technique of \cite{LLP} appears insufficient to prove the sharp upper bound for asymptotically larger values of $k=o(n)$ due to a number of technical complications, which are amended by the strategy outlined in Steps 1-4 above.

\smallskip

We conclude with a couple of natural open questions.

\begin{question} \label{q2}
What is the mixing time of the $(n,k)$ Bernoulli--Laplace model when $k/n \to \lambda\in\big(0,\frac{1}{2}\big)$?
\end{question}

Question \ref{q2} was studied in \cite{NW}, where it was shown that
\begin{equation} \label{eq:q2}
\frac{\log n}{2\log(1-2\lambda)^{-1}} - c(\e) \leq t_\mr{mix}(\e) \leq \frac{\log \big(\frac{n}{\e}\big)}{2\lambda(1-\lambda)}.
\end{equation}
We warn the reader that there exists a misprint in the statement of \cite[Theorem~4]{NW}, where the lower bound of the above result is wrongfully claimed to be $\frac{1}{4\lambda}\log n$, while in fact they prove the inequality that we present in \eqref{eq:q2}. We conjecture that the lower bound in \eqref{eq:q2} is sharp.

In view of the well known connection between mixing times and logarithmic Sobolev inequalities (see \cite{DSL96}), one is tempted to ask whether Theorem \ref{main} (and perhaps even an answer to Question \ref{q2}) can be recovered via the evaluation of the log-Sobolev constant of the chain.

\begin{question}
What is the log-Sobolev constant of the $(n,k)$ Bernoulli--Laplace urn model?
\end{question}

Finally, we wish to note that a version of the $(n,k)$ Bernoulli--Laplace urn model with multiple urns (generalizing the model of \cite{FS}) was studied in \cite{NW} and some estimates for its mixing time were obtained. As adapting the strategy of the present work to treat this model seems to require multiple technical (and perhaps even conceptual) modifications, we choose not to pursue it here; however we believe that the following question is of interest.

\begin{question} \label{q3}
What is the mixing time of the many-urn Bernoulli--Laplace model?
\end{question}

\subsection*{Asymptotic notation} In what follows we use the convention that for $a,b\in[0,\infty]$ the notation $a\gtrsim b$ or $b=O(a)$ (respectively $a\lesssim b$ or $a=O(b)$) means that there exists a universal constant $c\in(0,\infty)$ such that $a\geq cb$ (respectively $a\leq cb$). Moreover, $a\asymp b$ stands for $(a\lesssim b)\wedge(a\gtrsim b)$. Finally, we write $a=o(b)$ if $a$ is a function of $b$ with $\lim_{b\to\infty} a/b=0$.


\section{Preliminaries} \label{sec:2}

In this section, we will present various simple properties of the Bernoulli--Laplace urn model along with some classical estimates, which we shall later use for the proof of Theorem \ref{main}. Here and throughout, we will denote by $(X_t)$ the Bernoulli--Laplace chain on $\{0,1,\ldots,n\}$ with transition matrix
\begin{equation} \label{eq:transition}
P(i,j) = \begin{cases}
\mathlarger{\sum}\limits_{m=0}^{\min\{i,n-j,k-j+i \}} \frac{\binom{i}{m}\binom{n-i}{k-m}\binom{n-i}{j-i+m}\binom{i}{k-j+i-m}}{\binom{n}{k}^2}, & \mbox{if } i\leq j \\ 
\ \ \ \mathlarger{\sum}\limits_{m=i-j}^{\min\{i,n-j,k\}} \frac{\binom{i}{m}\binom{n-i}{k-m}\binom{n-i}{m-i+j}\binom{i}{k+i-j-m}}{\binom{n}{k}^2}, &\mbox{if } i> j
\end{cases}
\end{equation}
and starting point $X_0$ and by $(Y_t)$ another copy of the chain with $Y_0$ distributed according to the stationary measure $\pi_n$. The pair $(P,\pi_n)$ is a reversible Markov chain.

We also note that the Bernoulli--Laplace urn model posesses the following symmetry. If $(X_t)$ is the Bernoulli--Laplace chain with $k$ out of $n$ balls being swapped at each step and $(\widetilde{X}_t)$ is the chain where instead $n-k$ out of $n$ balls are being swapped at each step, then the mixing time of $(X_t)$ coincides with the mixing time of $(\widetilde{X}_t)$. This is a consequence of the bijection constructed as follows: when two sets of $k$ balls are swapped in $(X_t)$, then the complements of those sets are swapped in $(\widetilde{X}_t)$. In particular, from now on we will always assume that $k\leq\frac{n}{2}$.


\subsection{A combinatorial coupling} \label{subsec:2.1} Recall that if $\mu, \nu$ are probability measures on the spaces $(\Omega,\ms{F})$ and $(\Omega',\ms{F}')$ respectively, then a coupling $\ms{C}$ of $\mu$ and $\nu$ is a probability measure on $(\Omega\times\Omega',\ms{F}\otimes\ms{F}')$ whose marginals are $\mu$ and $\nu$, i.e. such that for $A\in\ms{F}$ and $B\in\ms{F}'$,
$$\ms{C}\big( \{(x,y): \ x\in A\}\big) = \mu(A) \ \ \mbox{and} \ \ \ms{C}\big(\{ (x,y): \ y\in B\}\big) = \nu(B).$$
The first ingredient needed for the proof of Theorem \ref{main} is a coupling of two instances of the Bernoulli--Laplace chain, constructed in \cite{NW}, which was used there to show the suboptimal upper bound
$$t_{\mr{mix}}(\e) \leq \frac{n}{2k} \log\Big(\frac{n}{\e}\Big)$$
for the mixing time of the Bernoulli--Laplace model with $k=o(n)$ swaps. According to this coupling, let $(X_t)$ and $(Y_t)$ be two instances of the Bernoulli--Laplace chain. Recall that $X_t$ and $Y_t$ is the number of red balls in the left urn in each of them after $t$ steps. For each time $t\geq0$, label the balls in both realizations of the chain as follows. First, label the balls in the left urns using the numbers $\{1,\ldots,n\}$ so that all red balls preceed all white balls in the ordering induced by this labeling. Then, do the same for the right urns using the numbers $\{n+1,\ldots,2n\}$. Now, choose $A$ and $B$ two subsets of $\{1,\ldots,n\}$ and $\{n+1,\ldots,2n\}$ respectively, each of cardinality $k$, and swap the balls indexed by $A$ in the left urns with those indexed by $B$ in the right. Finally, denote by $X_{t+1}$ and $Y_{t+1}$ the updated number of red balls in each of the left urns and remove the labels. One can easily check that, by the construction of this coupling, we always have
\begin{equation} \label{eq:couplingismonotone}
|X_{t+1}-Y_{t+1}| \leq |X_t-Y_t|.
\end{equation}
Furthermore, it was shown in \cite{NW} that if $|X_t-Y_t|=1$ for some $t\geq0$ and $(X_{t+1},Y_{t+1})$ is generated according to the coupling above, then
\begin{equation} \label{eq:couplingdecrease}
\mb{E}\big( |X_{t+1}-Y_{t+1}| \big| X_t, Y_t \big) = 1-\frac{2k(n-k)}{n^2}.
\end{equation}
Imitating the proof of the {\it path coupling theorem} of Bubley and Dyer (see \cite{BD} or \cite[Theorem~14.6]{LPW}), one gets the following proposition by iterating \eqref{eq:couplingdecrease} along suitable paths.

\begin{proposition} \label{prop:coupling}
Let $(X_t)$ and $(Y_t)$ be two instances of the Bernoulli--Laplace chain with $k$ swaps coupled as in the discussion above. For $r\in(0,\infty)$, let
\begin{equation} \label{eq:taur}
\tau_\mr{couple}(r) \eqdef \min \{t: \ |X_t-Y_t| \leq r \}.
\end{equation}
Then, for every $t\in(0,\infty)$,
\begin{equation} \label{eq:lemcoupling}
\mb{P}\big( \tau_\mr{couple}(r)>t \ \big| \ X_0, Y_0\big) \leq \Big(1-\frac{2k(n-k)}{n^2}\Big)^t \frac{|X_0-Y_0|}{r}.
\end{equation}
\end{proposition}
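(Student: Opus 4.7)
The plan is to iterate the unit-distance contraction \eqref{eq:couplingdecrease} via a Bubley--Dyer style path-coupling argument and then conclude by Markov's inequality. Set $\alpha := \frac{2k(n-k)}{n^2}$ and $D_t := |X_t - Y_t|$. First, observe that by the monotonicity property \eqref{eq:couplingismonotone}, the sequence $(D_t)_{t \geq 0}$ is almost surely non-increasing, so $\{\tau_{\mr{couple}}(r) > t\} = \{D_t > r\}$ and a single application of Markov's inequality reduces the proposition to the bound
$$\mb{E}\bigl[D_t \ \big|\ X_0, Y_0\bigr] \leq (1-\alpha)^t\, |X_0 - Y_0|.$$

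By the tower property, this follows by induction on $t$ once the one-step contraction $\mb{E}[D_{t+1} \mid X_t, Y_t] \leq (1-\alpha) D_t$ is established. To prove this, I would imitate the path-coupling construction. Assume without loss of generality that $X_t = x \leq y = Y_t$, set $d := y - x$, and insert intermediate states $z_i := x + i$ for $i \in \{0, 1, \ldots, d\}$. For each $i$, the sorted-ball coupling of Subsection \ref{subsec:2.1} applied to the unit-distance pair $(z_i, z_{i+1})$ yields a joint law $\mathcal{C}_i$ on successor states whose marginals are $P(z_i, \cdot)$ and $P(z_{i+1}, \cdot)$ and whose expected absolute difference equals $1 - \alpha$ by \eqref{eq:couplingdecrease}. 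A standard gluing of these couplings (sampling $W_0 \sim P(z_0, \cdot)$ and, for each $i$, then drawing $W_{i+1}$ from the conditional law of the second coordinate of $\mathcal{C}_i$ given that the first coordinate equals $W_i$) produces a joint distribution of $(W_0, W_1, \ldots, W_d)$ such that $(W_i, W_{i+1}) \sim \mathcal{C}_i$ for every $i$. Declaring $X_{t+1} := W_0$ and $Y_{t+1} := W_d$ then defines a coupling of $P(x,\cdot)$ and $P(y,\cdot)$, and the triangle inequality combined with \eqref{eq:couplingdecrease} yields
$$\mb{E}\bigl[|X_{t+1} - Y_{t+1}| \,\big|\, X_t, Y_t\bigr] \leq \sum_{i=0}^{d-1} \mb{E}[|W_i - W_{i+1}|] = d(1-\alpha) = (1-\alpha) D_t.$$

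Iterating this one-step contraction in $t$ gives the desired bound on $\mb{E}[D_t \mid X_0, Y_0]$, and the Markov inequality reduction from the first paragraph completes the argument. The one delicate technical point is the gluing step, where one must check that composing the unit-distance couplings along the chain $z_0, z_1, \ldots, z_d$ preserves consistency of all marginals; this is routine (since each $\mathcal{C}_i$ has the correct second marginal, which matches the first marginal of $\mathcal{C}_{i+1}$) but is the only place where one must go beyond quoting the single-step contraction.
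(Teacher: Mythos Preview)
Your proposal is correct and follows exactly the path--coupling approach the paper indicates (it merely cites Bubley--Dyer and says to ``iterate \eqref{eq:couplingdecrease} along suitable paths''), so there is nothing substantive to compare.

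One small remark: the abstract Markovian gluing you describe produces \emph{some} coupling of $P(x,\cdot)$ and $P(y,\cdot)$ satisfying the contraction, but the proposition is phrased for the specific sorted--label coupling of Subsection~\ref{subsec:2.1}. That coupling is already a \emph{grand} coupling---a single uniform choice of $(A,B)$ updates every state simultaneously---so you can simply let $W_i$ be the image of $z_i$ under the common $(A,B)$; then each pair $(W_i,W_{i+1})$ is exactly the unit--distance coupling with $\mb{E}|W_{i+1}-W_i|=1-\alpha$, and $W_0=X_{t+1}$, $W_d=Y_{t+1}$ are literally the outputs of the coupling in the statement. This avoids the conditional--gluing step entirely and shows the bound for the stated coupling rather than a possibly different glued one.
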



\subsection{Eigenvalues and eigenvectors} \label{subsec:2.2}
As mentioned in the introduction, the proof of Diaconis and Shahshahani \cite{Bernoulli-Laplace} for the cutoff behavior of the Bernoulli--Laplace urn model with $k=1$ ball being swaped at each step relied on a spectral bound for the total variation distance that was firstly used in \cite{RT} and a careful estimation of the eigenvalues of the walk. Let $(W_t)$ be a Markov chain on a state space $\Omega$. Recall that $\lambda\in[-1,1]$ is an eigenvalue of $(W_t)$ corresponding to the eigenvector $f:\Omega\to\R$, if 
\begin{equation} \label{eq:eigenvalue}
\mb{E}\big( f(W_{t+1})|W_t\big) = \lambda f(W_t),
\end{equation}
for every (equivalently, for some) $t\geq0$. Since $(X_t)$ is an irreducible, aperiodic and reversible chain on $\{0,1,\ldots,n\}$, it has $n+1$ real eigenvalues $1=\lambda_0$ and $\lambda_1,\ldots,\lambda_{n}\in(-1,1)$ which depend on both $n$ and $k$. An explicit description of the eigenvalues and eigenvectors of the Bernoulli--Laplace urn model for a general number of swaps at each step is known (see, e.g., \cite{Bernoulli-Laplace} and \cite{NW}). The eigenvectors are instances of the dual Hahn polynomials (see \cite{KZ}) and if $f_i(x)$ is the $i$-th eigenvector, then the corresponding eigenvalue is $f_i(k)$. 

As the exact expressions for these eigenvectors and eigenvalues are complicated, an efficient estimation, in the spirit of \cite{Bernoulli-Laplace}, of the spectral bound for the total variation distance seems intractable. Our analysis below relies solely on the precise values of two eigenvalues and eigenvectors of $(X_t)$, which correspond to the linear and quadratic dual Hahn polynomials respectively. In particular, using that the function $f_1(x)=1-\frac{2x}{n}$ is an eigenfunction with corresponding eigenvalue $\lambda_1=f_1(k)$ and that the function
\begin{equation} \label{eq:2ndeigen}
f_2(x)=1-\frac{2(2n-1)x}{n^2}+\frac{2(2n-1)x(x-1)}{n^2(n-1)}
\end{equation}
is an eigenfunction corresponding to the eigenvalue $\lambda_2=f_2(k)$, we derive the following identities.

\begin{lemma} \label{lem:expandvarident}
For every $t\geq0$, we have
\begin{equation} \label{eq:expectationident}
\mb{E}\big(X_t|X_0\big) = \frac{n}{2} - \Big(1-\frac{2k}{n}\Big)^t \Big(\frac{n}{2}-X_0\Big)
\end{equation}
and
\begin{equation} \label{eq:varianceident}
\mr{Var}\big(X_t|X_0\big) =  \frac{n^2}{4(2n-1)} + \frac{n^2(n-1)}{2(2n-1)}f_2(k)^t f_2(X_0) - \frac{n^2}{4}\Big(1-\frac{2k}{n}\Big)^{2t}\Big(1-\frac{2X_0}{n}\Big)^2
\end{equation}
\end{lemma}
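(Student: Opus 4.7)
The plan is to leverage the eigenvalue relation \eqref{eq:eigenvalue} iteratively, which gives $\mathbb{E}(f_i(X_t)\mid X_0)=\lambda_i^t f_i(X_0)$ for any eigenfunction $f_i$. Applying this to the two specific eigenfunctions supplied will let us read off the first and second moments of $X_t$ given $X_0$, from which \eqref{eq:expectationident} and \eqref{eq:varianceident} follow by routine algebra.

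First, for the expectation, I would apply the tower/iteration of \eqref{eq:eigenvalue} to $f_1(x)=1-\frac{2x}{n}$ with $\lambda_1=1-\frac{2k}{n}$, yielding
\begin{equation*}
1-\frac{2}{n}\mathbb{E}(X_t\mid X_0)=\Big(1-\frac{2k}{n}\Big)^t\Big(1-\frac{2X_0}{n}\Big),
\end{equation*}
which is \eqref{eq:expectationident} after solving for $\mathbb{E}(X_t\mid X_0)$.

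Second, for the variance, I would iterate \eqref{eq:eigenvalue} for $f_2$ to obtain $\mathbb{E}(f_2(X_t)\mid X_0)=f_2(k)^t f_2(X_0)$, and then recognize $f_2$ as a quadratic in $x$. Explicitly, collecting the linear and quadratic terms in \eqref{eq:2ndeigen} gives
\begin{equation*}
f_2(x)=1-\frac{2(2n-1)}{n(n-1)}\,x+\frac{2(2n-1)}{n^2(n-1)}\,x^2,
\end{equation*}
so taking conditional expectation on both sides and solving for the second moment yields
\begin{equation*}
\mathbb{E}(X_t^2\mid X_0)=\frac{n^2(n-1)}{2(2n-1)}\bigl(f_2(k)^t f_2(X_0)-1\bigr)+n\,\mathbb{E}(X_t\mid X_0).
\end{equation*}
Subtracting $\mathbb{E}(X_t\mid X_0)^2$ and using the identity $a(n-a)=\frac{n^2}{4}-\bigl(\frac{n}{2}-a\bigr)^2$ with $a=\mathbb{E}(X_t\mid X_0)$ from the first step, together with the arithmetic simplification $\frac{n^2}{4}-\frac{n^2(n-1)}{2(2n-1)}=\frac{n^2}{4(2n-1)}$, gives exactly \eqref{eq:varianceident}.

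There is no real obstacle here beyond bookkeeping: the only nontrivial ingredient is the claim, stated just before the lemma, that $f_2$ in \eqref{eq:2ndeigen} is an eigenfunction of $P$ with eigenvalue $f_2(k)$, and this is granted by the paper's citation of the dual Hahn polynomial description of the spectrum. Consequently I would present the argument as a short computation once the eigenfunction identities are in hand, with the mild care point being to keep the linear and quadratic coefficients of $f_2(x)$ straight when extracting $\mathbb{E}(X_t^2\mid X_0)$.
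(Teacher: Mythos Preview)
Your proof is correct and follows essentially the same strategy as the paper: iterate the eigenfunction relation for $f_1$ to get the mean, and for $f_2$ to access the second moment. The only organizational difference is that, for the variance, the paper exploits the identity $f_1(x)^2=\tfrac{1}{2n-1}+\tfrac{2(n-1)}{2n-1}f_2(x)$ to compute $\mathbb{E}\big(f_1(X_t)^2\mid X_0\big)$ directly and then reads off $\mr{Var}(X_t\mid X_0)=\tfrac{n^2}{4}\mr{Var}\big(f_1(X_t)\mid X_0\big)$, whereas you expand $f_2$ as a quadratic and solve for $\mathbb{E}(X_t^2\mid X_0)$; these are equivalent bits of algebra, though the identity is worth recording since the paper reuses it in later variance estimates.
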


\begin{proof}
Iterating the following eigenvalue property
\begin{equation} \label{eq:eigenv1}
\mb{E}\Big( 1-\frac{2X_{t}}{n}\Big| X_{t-1} \Big) = \Big(1-\frac{2k}{n}\Big)\cdot\Big(1-\frac{2X_{t-1}}{n}\Big),
\end{equation}
we get that
\begin{equation*}
\mb{E}\Big(1-\frac{2X_t}{n} \Big| X_0\Big) = \Big(1-\frac{2k}{n}\Big)^t\Big(1-\frac{2X_0}{n}\Big),
\end{equation*}
which is equivalent to \eqref{eq:expectationident}. For \eqref{eq:varianceident}, notice that
\begin{equation} \label{eq:f1f2}
f_1(x)^2 = \frac{1}{2n-1} + \frac{2n-2}{2n-1} f_2(x),
\end{equation}
therefore by similar reasoning as above, we have
\begin{equation*}
\mb{E}\Big( \Big(1-\frac{2X_t}{n}\Big)^2\Big| X_0\Big) = \frac{1}{2n-1} + \frac{2n-2}{2n-1} f_2(k)^t f_2(X_0).
\end{equation*}
Hence,
\begin{equation*}
\begin{split}
\mr{Var}\big(X_t|X_0\big) & = \frac{n^2}{4} \mr{Var}\Big(1-\frac{2X_t}{n}\Big|X_0\Big) \\
& = \frac{n^2}{4(2n-1)} + \frac{n^2(n-1)}{2(2n-1)}f_2(k)^t f_2(X_0) - \frac{n^2}{4}\Big(1-\frac{2k}{n}\Big)^{2t}\Big(1-\frac{2X_0}{n}\Big)^2,
\end{split}
\end{equation*}
which is precisely \eqref{eq:varianceident}.
\end{proof}


\subsection{Variance bounds} \label{subsec:2.3} The following estimates are a straightforward consequence of Lemma \ref{lem:expandvarident}. Recall that the stationary measure $\pi_n$ defined in \eqref{eq:pin} has mean $\frac{n}{2}$ and variance $\frac{n^2}{4(2n-1)} \asymp n$.

\begin{lemma} \label{lem:expandvar}
Let $t\geq\frac{n}{4k}\log n$. Then
\begin{equation} \label{eq:expandvar}
\Big|\mb{E}\big(X_{t}|X_0\big)-\frac{n}{2}\Big| \leq \sqrt{n} \ \ \ \mbox{and} \ \ \ \mathrm{Var}\big(X_t|X_0\big)\leq n.
\end{equation}
\end{lemma}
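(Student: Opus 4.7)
The plan is to apply Lemma \ref{lem:expandvarident} directly and bound each of the terms appearing in the formulas for $\mb{E}(X_t|X_0)$ and $\mr{Var}(X_t|X_0)$, using the hypothesis $t\geq\frac{n}{4k}\log n$ together with the standing assumption $k\leq n/2$ (so that $1-2k/n\geq 0$).

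First I would handle the mean. The inequality $1-x\leq e^{-x}$ yields $\bigl(1-\tfrac{2k}{n}\bigr)^t\leq e^{-2kt/n}\leq e^{-(\log n)/2}=\tfrac{1}{\sqrt{n}}$, and combined with the trivial bound $|X_0-n/2|\leq n/2$, identity \eqref{eq:expectationident} immediately gives $|\mb{E}(X_t|X_0)-n/2|\leq \sqrt{n}/2\leq\sqrt{n}$.

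For the variance identity \eqref{eq:varianceident}, the third summand is non-positive (and therefore only helps), while the first summand is $\tfrac{n^2}{4(2n-1)}\leq n/4$, so the task reduces to controlling the middle summand $\tfrac{n^2(n-1)}{2(2n-1)}f_2(k)^t f_2(X_0)$. The principal difficulty is that $f_2(k)$ can be negative, so the naive bound $f_2\leq f_1^2$ cannot be used in isolation. Rearranging the already-established relation \eqref{eq:f1f2} as $f_2(x)=\tfrac{(2n-1)f_1(x)^2-1}{2n-2}$, I would observe that $f_2(x)\in\bigl[-\tfrac{1}{2n-2},\,f_1(x)^2\bigr]$ for every $x\in\{0,\ldots,n\}$; hence $|f_2(k)|^t\leq\max\{f_1(k)^{2t},(2n-2)^{-t}\}$. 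The first of these is at most $1/n$ upon squaring the mean estimate, while the second is at most $1/n$ as soon as $t\geq 1$ and $n\geq 2$. Combined with $|f_2(X_0)|\leq 1$, this bounds the middle term by $\tfrac{n^2(n-1)}{2(2n-1)}\cdot\tfrac{1}{n}\leq\tfrac{n}{2}$, whence $\mr{Var}(X_t|X_0)\leq\tfrac{n}{4}+\tfrac{n}{2}=\tfrac{3n}{4}\leq n$.

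The main (and essentially only) technical obstacle is that $f_2(k)$ need not be non-negative, so one cannot just drop signs in the middle term; once this is addressed by invoking the algebraic identity \eqref{eq:f1f2} to reduce to powers of $f_1(k)^2$, the remainder of the argument is routine arithmetic with the ingredients already established.
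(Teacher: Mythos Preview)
Your proof is correct. The mean bound is handled identically in both your argument and the paper's. For the variance, however, you take a genuinely different route: you bound the three summands of \eqref{eq:varianceident} separately, dropping the (non-positive) third term and using \eqref{eq:f1f2} applied to $k$ to show $|f_2(k)|^t\leq 1/n$ under the hypothesis $t\geq\tfrac{n}{4k}\log n$. The paper instead regroups the second and third summands via \eqref{eq:f1f2} applied to $X_0$, rewriting the variance as
\[
\frac{n^2}{4(2n-1)}\Big(1-\Big(1-\tfrac{2k}{n}\Big)^{2t}\Big)+\frac{n^2(n-1)}{2(2n-1)}\,f_2(X_0)\Big(f_2(k)^t-\Big(1-\tfrac{2k}{n}\Big)^{2t}\Big),
\]
and then uses $f_2(X_0)\geq-\tfrac{1}{2(n-1)}$ together with $0\leq\bigl(1-\tfrac{2k}{n}\bigr)^{2t}-f_2(k)^t\leq 1$. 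The payoff of the paper's regrouping is that it yields $\mr{Var}(X_t\mid X_0)\leq n$ for \emph{every} $t\geq 0$, not only for $t\geq\tfrac{n}{4k}\log n$; this uniform-in-$t$ bound is invoked later in the proof of Proposition~\ref{prop:tau4}. Your approach is more direct but relies on the hypothesis on $t$ in an essential way, so it does not immediately deliver that stronger conclusion.
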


\begin{proof}
By Lemma \ref{lem:expandvarident}, we see that
\begin{equation*}
\begin{split}
\Big|\mb{E}\big(X_t|X_0\big) - \frac{n}{2}\Big| \leq \Big(1-\frac{2k}{n}\Big)^t n \leq e^{-\frac{\log n}{2}} n =\sqrt{n}.
\end{split}
\end{equation*}
Using \eqref{eq:f1f2} again, we can rewrite \eqref{eq:varianceident} as
\begin{equation} \label{eq:varproof1}
\mathrm{Var}\big(X_t|X_0\big) = \frac{n^2}{4(2n-1)} \Big( 1-\Big(1-\frac{2k}{n}\Big)^{2t} \Big) + \frac{n^2(n-1)}{2(2n-1)} f_2(X_0) \Big( f_2(k)^t-\Big(1-\frac{2k}{n}\Big)^{2t}\Big).
\end{equation}
Notice that $f_2(X_0)\geq f_2(n/2)=-\frac{1}{2(n-1)}$ and, by \eqref{eq:f1f2},
\begin{equation} \label{eq:varproof2}
0\leq \Big(1-\frac{2k}{n}\Big)^{2t} - f_2(k)^t =  \Big(\frac{1}{2n-1} + \frac{2n-2}{2n-1} f_2(k) \Big)^t - f_2(k)^t \leq1,
\end{equation}
since $f_2(k) \leq \max\{f_2(0),f_2(n)\}=1$. Therefore, we deduce that
$$\mathrm{Var}\big(X_t|X_0\big) \leq \frac{n^2}{2(2n-1)} \leq n,$$
which concludes the proof of \eqref{eq:expandvar}.
\end{proof}

It is an immediate consequence of Lemma \ref{lem:expandvar} in combination with Chebyshev's inequality that for $t\geq\frac{n}{4k}\log n$ and $r\in(0,\infty)$, we have
\begin{equation} \label{eq:concentration}
\mb{P}\Big(\big| X_t-\frac{n}{2}\big| > r\ \Big|\ X_0\Big) \leq \mb{P}\Big( \big| X_t - \mb{E}\big(X_t|X_0\big)\big| > r-\sqrt{n} \ \Big| \ X_0\Big) \leq \frac{n}{(r-\sqrt{n})^2}.
\end{equation}
In particular, for $\kappa_1\in(2,\infty)$, we deduce that
\begin{equation} \label{eq:concsqrt}
\mb{P}\Big( \big| X_t-\frac{n}{2}\big| > \kappa_1 \sqrt{n} \ \Big| \ X_0\Big) \lesssim \frac{1}{\kappa_1^2}.
\end{equation}
Using a martingale argument, we will show that for $r\gtrsim\sqrt{n}$, the chain $(X_t)$ will remain at distance at least $r$ from $\frac{n}{2}$ for at least a constant multiple of $\frac{n}{k}$ additional steps, with probability of the same order as in \eqref{eq:concentration}. This will be of central importance in the ensuing discussion.

\begin{proposition} \label{prop:doob}
Let $0\leq t_1\leq t_2$. If $k=o(n)$, then for every $r\in(0,\infty)$ we have
\begin{equation} \label{eq:doob}
\mathbb{P}\bigg( \bigcup_{t\in[t_1,t_2]} \Big\{ \Big|X_t-\frac{n}{2}\Big|> r \Big\}  \ \bigg| \ X_0 \bigg) \lesssim \frac{\exp\big(\frac{4k(t_2-t_1)}{n}\big)n^2}{r^2} \max\Big\{\frac{1}{n},  \min\big\{f_2(k)^{t_2}, f_2(X_0)\big\}\Big\}.
\end{equation}
In particular, if $\big| X_0-\frac{n}{2}\big| \leq A\sqrt{n}$, where $A\in(0,\infty)$, then for $t_0\in\N$ and $\gamma\in(0,\infty)$, we have
\begin{equation} \label{eq:doobgoodstart}
\mathbb{P}\bigg( \bigcup_{t\in[t_0,t_0+\gamma n/k]} \Big\{ \Big|X_t-\frac{n}{2}\Big|> r  \Big\} \ \bigg| \ X_0  \bigg) \lesssim \frac{A^2 e^{4\gamma}n}{r^2}
\end{equation}
and
\begin{equation} \label{eq:loglog}
\mathbb{P}\bigg( \bigcup_{t\in[t_0,t_0+ \frac{n}{k}\log\log n]} \Big\{ \Big|X_t-\frac{n}{2}\Big|> r  \Big\} \ \bigg| \ X_0  \bigg) \lesssim \frac{A^2 n(\log n)^4}{r^2}.
\end{equation}
\end{proposition}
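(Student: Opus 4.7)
The plan is to apply Doob's $L^2$ maximal inequality to the martingale
\[
M_t \eqdef \frac{X_t - n/2}{\lambda_1^t}, \qquad \lambda_1 \eqdef 1 - \frac{2k}{n},
\]
whose martingale property is exactly \eqref{eq:eigenv1}. Since $\lambda_1 \in (0,1]$ for $k \leq n/2$, the identity $|X_t - n/2| = \lambda_1^t |M_t|$ yields the pointwise bound $|X_t - n/2| \leq |M_t|$. To localize the maximum to the window $[t_1,t_2]$, I would first apply the Markov property at time $t_1$: conditionally on $X_{t_1}$, the process $(X_{t_1+s})_{s \geq 0}$ is a fresh copy of the chain started from $X_{t_1}$, and the shifted martingale $\tilde M_s \eqdef (X_{t_1+s} - n/2)/\lambda_1^s$ still dominates $|X_{t_1+s} - n/2|$. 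Doob's $L^2$ maximal inequality then gives
\[
\mathbb{P}\bigg(\max_{t \in [t_1,t_2]} \Big|X_t - \tfrac{n}{2}\Big| > r \ \bigg|\ X_{t_1}\bigg) \leq \frac{\mathbb{E}\big(\tilde M_{t_2-t_1}^2 \,\big|\, X_{t_1}\big)}{r^2}.
\]

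The conditional second moment on the right is evaluated via \eqref{eq:f1f2} and the eigenfunction identity for $f_2$, exactly as in the derivation of Lemma \ref{lem:expandvarident} but applied to the shifted chain, yielding a linear expression in $\lambda_2^{t_2-t_1} f_2(X_{t_1})$. Taking a further expectation over $X_{t_1}$ given $X_0$ and using $\mathbb{E}(f_2(X_{t_1})\mid X_0) = \lambda_2^{t_1} f_2(X_0)$, the exponents combine as $\lambda_2^{t_1} \cdot \lambda_2^{t_2-t_1} = \lambda_2^{t_2}$. Coupled with the Taylor estimate $\lambda_1^{-2(t_2-t_1)} \lesssim \exp(4k(t_2-t_1)/n)$ (valid for $k=o(n)$ because $-2\log(1-2k/n) = 4k/n + O(k^2/n^2)$), this yields
\[
\mathbb{P}\bigg(\max_{t \in [t_1,t_2]} \Big|X_t - \tfrac{n}{2}\Big| > r \ \bigg|\ X_0\bigg) \lesssim \frac{n^2 \exp(4k(t_2-t_1)/n)}{r^2}\bigg[\frac{1}{n} + \lambda_2^{t_2} f_2(X_0)\bigg].
\]
A sign case-distinction then finishes: if $f_2(X_0) \geq 0$ then $\lambda_2^{t_2} f_2(X_0) \leq \min\{\lambda_2^{t_2}, f_2(X_0)\}$, while if $f_2(X_0) < 0$ the product is negative and dominated by $1/n$; in either case the bracket is $\lesssim \max\{1/n,\min\{f_2(k)^{t_2}, f_2(X_0)\}\}$, which is precisely \eqref{eq:doob}.

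For the corollaries, if $|X_0 - n/2| \leq A\sqrt{n}$ then $(1-2X_0/n)^2 \leq 4A^2/n$, so \eqref{eq:f1f2} gives $f_2(X_0) \lesssim A^2/n$; consequently the $\max$ factor in \eqref{eq:doob} is $\lesssim A^2/n$. Substituting $t_2-t_1 = \gamma n/k$ converts the exponential into $e^{4\gamma}$ and yields \eqref{eq:doobgoodstart}, and substituting $t_2-t_1 = (n/k)\log\log n$ converts it into $(\log n)^4$ and yields \eqref{eq:loglog}.

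I expect the main delicate point to be bookkeeping rather than any conceptual obstacle: one has to orchestrate the two invocations of the $f_2$-eigenfunction identity — one on $[t_1,t_2]$ via the shifted chain and one on $[0,t_1]$ via the initial expectation — so that their exponents collapse cleanly into the single $\lambda_2^{t_2}$ that drives the appearance of $\min\{f_2(k)^{t_2},f_2(X_0)\}$ in \eqref{eq:doob}. Doob's maximal inequality, the Taylor expansion of $\lambda_1^{-2(t_2-t_1)}$ in the regime $k=o(n)$, and the final sign case-distinction on $f_2(X_0)$ are each routine.
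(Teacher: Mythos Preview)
Your proposal is correct and follows essentially the same approach as the paper: both apply Doob's maximal inequality to the martingale $M_t=(X_t-n/2)/\lambda_1^t$, compute the resulting second moment via the $f_2$-eigenfunction identity \eqref{eq:f1f2}, and finish with the Taylor estimate $\lambda_1^{-2(t_2-t_1)}\lesssim e^{4k(t_2-t_1)/n}$ and the sign case-split on $f_2(X_0)$. The only difference is in the localization to $[t_1,t_2]$: where you condition at time $t_1$ and re-average using $\mathbb{E}(f_2(X_{t_1})\mid X_0)=\lambda_2^{t_1}f_2(X_0)$, the paper instead applies Doob directly to $(M_t)$ on $[t_1,t_2]$ with the adjusted threshold $2r/(n\lambda_1^{t_1})$ (using $\lambda_1^t\le\lambda_1^{t_1}$ on that interval), which collapses the two eigenfunction invocations into one and avoids the Markov-property detour.
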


\begin{proof}
Notice that for $t\geq0$,
$$\Big| X_t-\frac{n}{2}\Big| > r \ \ \  \Longleftrightarrow \ \ \  \Big| 1-\frac{2X_t}{n}\Big| > \frac{2r}{n}$$
and define
\begin{equation}
M_t \eqdef \frac{1-\frac{2X_t}{n}}{\big(1-\frac{2k}{n}\big)^t},
\end{equation}
which is a martingale because of \eqref{eq:eigenv1}. Then, we have
$$\bigg\{\sup_{t\in[t_1,t_2]}\Big| 1-\frac{2X_t}{n}\Big|>\frac{2r}{n}\bigg\} \subseteq \bigg\{ \sup_{t\in[t_1,t_2]} |M_t| > \frac{2r}{n\big(1-\frac{2k}{n}\big)^{t_1}}\bigg\}$$
and therefore
$$\mathbb{P}\bigg( \bigcup_{t\in[t_1,t_2]} \bigg\{ \Big|X_t-\frac{n}{2}\Big|>r \bigg\} \ \bigg| \ X_0  \bigg) \leq \mathbb{P}\bigg( \sup_{t\in [t_1,t_2]} |M_t| >\frac{2r}{n\big(1-\frac{2k}{n}\big)^{t_1}} \ \bigg| \ X_0\bigg).$$
By Doob's maximal inequality, we get
\begin{equation*}
\begin{split}
\mathbb{P}\bigg( \sup_{t\in [t_1,t_2]} |M_t| & >\frac{2r}{n\big(1-\frac{2k}{n}\big)^{t_1}}\ \bigg| \ X_0\bigg)  \leq  \frac{n^2}{r^2}\big(1-\frac{2k}{n}\big)^{2t_1} \mb{E}\big(|M_{t_2}|^2\big| X_0\big) \\ & = \frac{n^2}{r^2} \Big(1-\frac{2k}{n}\Big)^{-2(t_2-t_1)} \mb{E}\big( f_1(X_{t_2})^2\big| X_0\big) \lesssim \frac{\exp\big(\frac{4k(t_2-t_1)}{n}\big)n^2}{r^2} \mb{E}\big( f_1(X_{t_2})^2\big| X_0\big).
\end{split}
\end{equation*}
Moreover, \eqref{eq:f1f2} gives the estimate
\begin{equation*}
\begin{split}
\mb{E}\big(f_1(X_{t_2})^2\big| X_0\big) & = \frac{1}{2n-1} +\frac{2n-2}{2n-1} f_2(k)^{t_2} f_2(X_0) \lesssim \max\Big\{ \frac{1}{n}, f_2(k)^{t_2} f_2(X_0) \Big\},
\end{split}
\end{equation*}
and \eqref{eq:doob} follows since $f_2(x) \leq 1$ for every $x\in\{0,1,\ldots,n\}$. If additionally, $\big|X_0-\frac{n}{2}\big| \leq A\sqrt{n}$,
$$f_2(X_0)\leq f_2\Big(\frac{n}{2}+A\sqrt{n}\Big) \stackrel{\eqref{eq:f1f2}}{\leq} f_1\Big(\frac{n}{2}+A\sqrt{n}\Big)^2 \lesssim \frac{A^2}{n},$$
therefore, \eqref{eq:doob} implies that
$$\mathbb{P}\bigg( \bigcup_{t\in[t_0,t_0+\gamma n/k]} \Big\{ \Big|X_t-\frac{n}{2}\Big|> r  \Big\} \ \bigg| \ X_0  \bigg) \lesssim \frac{e^{4\gamma}n^2}{r^2} \cdot \frac{A^2}{n} = \frac{A^2 e^{4\gamma}n}{r^2},$$
which is exactly \eqref{eq:doobgoodstart}. Similarly,
$$\mathbb{P}\bigg( \bigcup_{t\in[t_0,t_0+ \frac{n}{k}\log\log n]} \Big\{ \Big|X_t-\frac{n}{2}\Big|> r  \Big\} \ \bigg| \ X_0  \bigg) \lesssim \frac{e^{4\log\log n} n^2}{r^2}\cdot \frac{A^2}{n} =  \frac{A^2 n(\log n)^4}{r^2},$$
which completes the proof.
\end{proof}

For $\gamma\in[0,\infty)$, let $t_{n,k}(\gamma)\eqdef \frac{n}{4k}\log n + \frac{\gamma n}{k}$. Since 
$$f_2(k)^{t_{n,k}(\gamma)} \leq f_1(k)^{2t_{n,k}(\gamma)}= \Big(1-\frac{2k}{n}\Big)^{\frac{n}{2k}\log n + \frac{2\gamma n}{k}} \asymp \frac{1}{ne^{4\gamma}},$$
it follows from equation \eqref{eq:doob} that for every $\kappa, \gamma\in(0,\infty)$, we have
\begin{equation} \label{eq:doobsqrt}
\mb{P}\bigg(\bigcup_{t\in[t_{n,k}(0),t_{n,k}(\gamma)]} \Big\{ \Big| X_t-\frac{n}{2}\Big| > \kappa \sqrt{n}  \Big\} \ \bigg| \ X_0 \bigg) \lesssim \frac{e^{4\gamma}}{\kappa^2},
\end{equation}
which improves upon \eqref{eq:concsqrt} up to the universal constants in the right hand side. For fixed $\kappa,\gamma\in(0,\infty)$, consider the event
\begin{equation} \label{eq:Eevent}
E_{\kappa,\gamma} = \bigg\{ \Big|X_t-\frac{n}{2}\Big| \leq \kappa \sqrt{n}, \ \mbox{for every } t\in[t_{n,k}(0),t_{n,k}(\gamma)]\bigg\}.
\end{equation}
Then \eqref{eq:doobsqrt} can be rewritten as $\mb{P}(E_{\kappa,\gamma}^\msf{c})\lesssim e^{4\gamma}/\kappa^2$, where $E_{\kappa,\gamma}^\msf{c}$ is the complement of $E_{\kappa,\gamma}$.


\subsection{Hitting time estimates} \label{subsec:2.4}
Throughout the proof, we will make use of the following well-known hitting time lemma appearing, e.g., in \cite[Proposition~17.20]{LPW}.

\begin{lemma} \label{lem:lpw}
Let $(Z_t)$ be a non-negative supermartingale adapted to the filtration $(\ms{F}_t)$ and $\tau$ be  a stopping time. Suppose that
\begin{enumerate} [$\bullet$]
\item $Z_0 = z_0$,
\item $|Z_{t+1}-Z_t|\leq B$, for every $t\geq0$ and
\item there exists $\sigma\in(0,\infty)$ such that $\mr{Var}(Z_{t+1} | \ms{F}_t) > \sigma^2$ on the event $\{\tau>t\}$.
\end{enumerate}
Then, for every $u>12B^2/\sigma^2$, we have
\begin{equation} \label{eq:lpw}
\mb{P} (\tau>u) \leq \frac{4z_0}{\sigma \sqrt{u}}.
\end{equation}
\end{lemma}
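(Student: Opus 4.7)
The plan is to analyze the stopped process $Y_t := Z_{t \wedge \tau}$, which is a non-negative supermartingale with $Y_0 = z_0$ and, by optional stopping, $\mb{E} Y_u \le z_0$. The strategy combines a quadratic-variation estimate based on the conditional variance hypothesis with a truncation argument based on Doob's maximal inequality.

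For a parameter $a > 0$ to be optimized at the end, I would introduce $T_a := \inf\{t : Y_t \ge a\}$ and pass to the further stopped process $Y'_t := Y_{t \wedge T_a}$, which remains a non-negative supermartingale. Doob's maximal inequality for $Y$ gives $\mb{P}(T_a \le u) \le z_0/a$, while the crucial virtue of the truncation is the uniform bound $Y'_u \le a + B$ (since $Y_{T_a - 1} < a$ and increments are bounded by $B$), whence
\begin{equation*}
\mb{E}[(Y'_u)^2] \le (a+B)\mb{E}[Y'_u] \le (a+B) z_0.
\end{equation*}

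Now I would write the Doob decomposition $Y'_t = z_0 - A'_t + N'_t$ (with $A'$ predictable non-decreasing, $N'$ a martingale, both starting at $0$) and telescope the identity $\mb{E}[(Y'_{s+1})^2 - (Y'_s)^2 \mid \ms{F}_s] = -2Y'_s (A'_{s+1}-A'_s) + \mb{E}[(Y'_{s+1}-Y'_s)^2 \mid \ms{F}_s]$ to obtain
\begin{equation*}
\mb{E}\sum_{s=0}^{u-1}(Y'_{s+1} - Y'_s)^2 = \mb{E}[(Y'_u)^2] - z_0^2 + 2\mb{E}\sum_{s=0}^{u-1} Y'_s (A'_{s+1} - A'_s).
\end{equation*}
The cross term on the right is at most $2(a+B)\mb{E}[A'_u] \le 2(a+B) z_0$, using $Y'_s \le a+B$ and $\mb{E}[A'_u] = z_0 - \mb{E}[Y'_u] \le z_0$. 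On the other hand, since $\mr{Var}(Z_{s+1} \mid \ms{F}_s) > \sigma^2$ on $\{s < \tau\}$, the left-hand side is at least $\sigma^2 \mb{E}[u \wedge \tau \wedge T_a] \ge \sigma^2 u \cdot \mb{P}(\tau > u,\, T_a > u)$. Combining, one arrives at
\begin{equation*}
\mb{P}(\tau > u) \le \mb{P}(T_a \le u) + \mb{P}(\tau > u,\, T_a > u) \le \frac{z_0}{a} + \frac{3(a+B)z_0}{\sigma^2 u},
\end{equation*}
and the choice $a \asymp \sigma \sqrt{u}$ balances the first two terms into a quantity of order $z_0/(\sigma\sqrt{u})$; the residual $B$-dependent contribution is dominated by this precisely when $u > 12B^2/\sigma^2$, which is the hypothesis of the lemma, and tuning the constants produces the claimed prefactor~$4$.

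The main obstacle is handling the drift-induced cross term $2\mb{E}\sum Y'_s (A'_{s+1} - A'_s)$, which has no counterpart in the pure martingale case. The truncation at level $a$ is exactly what provides the uniform bound on $Y'_s$ needed to control this cross term by the variance-accumulation gain $\sigma^2 u$, rather than the reverse; without it, the drift correction could swamp the variance gain and the argument would fail.
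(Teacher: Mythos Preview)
The paper does not supply its own proof of this lemma: it is quoted as a well-known hitting time estimate with a pointer to \cite[Proposition~17.20]{LPW} and then used as a black box. Your proposal is correct and is essentially the standard argument (the one in Levin--Peres--Wilmer): stop at $\tau\wedge T_a$, bound $\mb{P}(T_a\le u)\le z_0/a$ via the maximal inequality for non-negative supermartingales, compare the accumulated conditional second moments $\ge \sigma^2\,\mb{E}[u\wedge\tau\wedge T_a]$ against the bound $\mb{E}[(Y'_u)^2]\le(a+B)z_0$ together with the drift cross term, and then optimize at $a\asymp\sigma\sqrt{u}$. Your identification of the cross term $2\mb{E}\sum Y'_s(A'_{s+1}-A'_s)$ as the key obstruction, and of the truncation as the device that tames it, is exactly right.

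The only point worth flagging is the final arithmetic: with your displayed bound $\sigma^2 u\,\mb{P}(\tau>u,\,T_a>u)\le 3(a+B)z_0$ and the balancing choice $a=\sigma\sqrt{u/3}$, the hypothesis $u>12B^2/\sigma^2$ delivers a prefactor $\tfrac{5}{2}\sqrt{3}\approx 4.33$ rather than exactly $4$. This is immaterial for the paper, which only ever uses the estimate up to universal constants (all downstream bounds are stated with $\lesssim$), but if you want to match the constant $4$ on the nose you would need to sharpen one of the intermediate inequalities slightly.
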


A straightforward application of Lemma \ref{lem:lpw} combined with Proposition \ref{prop:doob} and Proposition \ref{prop:coupling} is sufficient to prove cutoff for the Bernoulli--Laplace chain with $k=n^{o(1)}$ balls being swapped at each step. This is carried out in detail in Remark \ref{rem:weakbounds} and Remark \ref{rem:easycutoff}. To prove cutoff for asymptotically larger values of $k$, one needs to take into account that even though $\|X_{s+1}-X_s\|_\infty= 2k$ for every $s\geq0$, if $t\geq\frac{n}{4k}\log n$, then
\begin{equation} \label{eq:Xtdrift}
\mb{E}\big( |X_{t+1}-X_t| \ \big| \ X_0\big) \lesssim \sqrt{k}.
\end{equation}
This can be proven by first applying Jensen's inequality
$$\mb{E}\big( |X_{t+1}-X_t| \ \big| \ X_0\big) \leq \sqrt{\mb{E}\big( |X_{t+1}-X_t|^2 \ \big| \ X_0\big)}$$
and then employing the first two eigenvalues of the chain, as in the proof of Lemma \ref{lem:expandvarident}. We omit the details of this computation, since \eqref{eq:Xtdrift} will be a consequence of a more tiresome (but necessary for the proof) technical statement below (see Lemma \ref{lem:hoeffding}).

\section{Proof of Theorem \ref{main}} \label{sec:3}

We proceed with the proof of the main result of this article. The argument is divided in 4 subsections, each reflecting one of the steps explained in the outline presented in the introduction.


\subsection{Getting at distance $O\big( \sqrt{n} \big)$} \label{subsec:3.1}
Let $(X_t)$ and $(Y_t)$ be two instances of the Bernoulli--Laplace chain with $k=o(n)$ swaps at each step, where $X_0\in\{0,1,\ldots,n\}$ and $Y_0$ is distributed according to the stationary measure $\pi_n$. We will first show that with high probability, at time $\frac{n}{4k}\log n$, the difference $|X_t-Y_t|$ is smaller than a constant multiple of $\sqrt{n}$. The variance estimate \eqref{eq:concsqrt} of Subsection \ref{subsec:2.3} along with the union bound immediately imply the following lemma.

\begin{lemma} \label{lem:usevariance}
Let $(X_t)$, $(Y_t)$ be two copies of the Bernoulli--Laplace chain with $k$ swaps. For $\kappa_1\in(0,\infty)$, let
\begin{equation} \label{eq:deftau1}
\tau_1(\kappa_1) \eqdef \min\Big\{t: \ X_t, Y_t \in \Big(\frac{n}{2}-\kappa_1\sqrt{n}, \frac{n}{2}+\kappa_1\sqrt{n}\Big)\Big\}.
\end{equation}
Then,
\begin{equation} \label{eq:boundtau1}
\mb{P}\Big( \tau_1(\kappa_1) > \frac{n}{4k}\log n \ \Big| \ X_0, Y_0\Big) \lesssim \frac{1}{\kappa_1^2}.
\end{equation}
\end{lemma}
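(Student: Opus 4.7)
\smallskip

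\emph{Plan.} The key observation is a trivial stopping-time inclusion: if both $X_t$ and $Y_t$ happen to lie in the interval $I_{\kappa_1} \eqdef \bigl(\frac{n}{2} - \kappa_1 \sqrt{n},\ \frac{n}{2} + \kappa_1 \sqrt{n}\bigr)$ at the same time $t$, then by definition $\tau_1(\kappa_1) \leq t$. Specializing to $t = \frac{n}{4k}\log n$, this gives
$$\bigl\{\tau_1(\kappa_1) > t\bigr\} \subseteq \bigl\{X_t \notin I_{\kappa_1}\bigr\} \cup \bigl\{Y_t \notin I_{\kappa_1}\bigr\},$$
so a union bound reduces the proof to controlling $\mathbb{P}(X_t \notin I_{\kappa_1} \mid X_0)$ and $\mathbb{P}(Y_t \notin I_{\kappa_1} \mid Y_0)$ separately.

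For $\kappa_1 \leq 2$ the target bound $1/\kappa_1^2$ is of order $1$, so there is nothing to prove; assume therefore $\kappa_1 > 2$. In that range the concentration inequality \eqref{eq:concsqrt} yields
$$\mathbb{P}\bigl(|X_t - n/2| > \kappa_1 \sqrt{n} \bigm| X_0\bigr) \lesssim \frac{1}{\kappa_1^2},$$
uniformly in $X_0$. Since $(Y_t)$ is another copy of the same Markov chain, \eqref{eq:concsqrt} applies verbatim once we condition on $Y_0$---the fact that $Y_0 \sim \pi_n$ is not used here, only the transition kernel. Summing the two estimates gives the claim.

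In other words, the lemma is an immediate packaging of \eqref{eq:concsqrt} (itself a one-line Chebyshev deduction from the first- and second-moment identities of Lemma \ref{lem:expandvarident}) together with the above stopping-time inclusion. There is no real obstacle; the content of the statement is entirely already in the variance bounds of Subsection \ref{subsec:2.3}.
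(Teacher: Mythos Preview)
Your proposal is correct and follows essentially the same route as the paper: both use the inclusion $\{\tau_1(\kappa_1)>t_0\}\subseteq\{|X_{t_0}-n/2|>\kappa_1\sqrt{n}\}\cup\{|Y_{t_0}-n/2|>\kappa_1\sqrt{n}\}$ at $t_0=\frac{n}{4k}\log n$, apply a union bound, and invoke \eqref{eq:concsqrt} for each chain. Your explicit handling of the trivial case $\kappa_1\le 2$ is a nice touch, since \eqref{eq:concsqrt} is stated only for $\kappa_1>2$.
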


\begin{proof}
Let $t_0 = \frac{n}{4k}\log n$. Then
\begin{equation*}
\begin{split}
\mb{P}\Big( \tau_1(\kappa_1) > \frac{n}{4k}\log n\Big| X_0, Y_0\Big) \leq \mb{P}\Big( \big| X_{t_0}-\frac{n}{2}\big| > \kappa_1\sqrt{n} \Big| X_0\Big) + \mb{P}\Big( \big| Y_{t_0}-\frac{n}{2}\big| & >\kappa_1\sqrt{n}\Big| Y_0\Big) \stackrel{\eqref{eq:concsqrt}}{\lesssim} \frac{1}{\kappa_1^2},
\end{split}
\end{equation*}
which is precisely \eqref{eq:boundtau1}.
\end{proof}


\subsection{Getting at distance $O\big(\sqrt{k\log n}\big)$} \label{subsec:3.2}

Let $(X_t)$ and $(Y_t)$ be two instances of the Bernoulli--Laplace chain as before. We will now argue that if $(X_t)$ is independent of $(Y_t)$ after the stopping time $\tau_1(\kappa_1)$ of Lemma \ref{lem:usevariance}, then with high probability the difference $|X_t-Y_t|$ will become smaller than $O(\sqrt{k\log n})$ within $O\big(\frac{n}{k}\big)$ steps. 

Recall that in the combinatorial description of the chain, we denote by $X_t$ the number of red balls in the left urn after $t$ steps. For each $s\geq0$, we will define a new pair of (time inhomogeneous) Markov chains $(\msf{X}_s^{\mr{left}})$ and $(\msf{X}_s^{\mr{right}})$ on $\{0,1,\ldots,n\}$ with the property that for every $t\geq0$,
\begin{equation} \label{eq:distributioneq}
\msf{X}_{2kt}^{\mr{left}} \stackrel{d}{=} X_t \ \ \ \mbox{and} \ \ \ \msf{X}_{2kt}^{\mr{right}} \stackrel{d}{=} n-X_t,
\end{equation}
where $\stackrel{d}{=}$ denotes equality in distribution. For time $s=0$, denote by $\msf{X}_{0}^{\mr{left}}=X_0$ and $\msf{X}_{0}^{\mr{right}} =n-X_0$. For each time $s\in\{1,\ldots,k\}$ pick a ball uniformly at random from the left urn and place it in a storage space by the right urn. We will refer to the balls placed in the storage space as {\it unavailable} balls and to the rest as {\it available} balls. Notice that for every $s\in\{1,\ldots,k\}$, after the $s$-th ball has been moved from left to right, there are $n-s$ balls in the left urn and $n+s$ balls in the right urn, $s$ of which are unavailable. For each time $s\in\{k+1,\ldots,2k\}$ pick an available ball uniformly at random from the right urn and place it in the left urn. Finally, after $k$ balls have been moved back to the left urn, label all unavailable balls as available, place them in the right urn and restart the process. After $s$ steps of this process, denote by $\msf{X}_s^{\mr{left}}$ the number of red balls in the left urn and by $\msf{X}_s^{\mr{right}}$ the number of {\it available} red balls in the right urn. It is evident from the construction that
\begin{equation*}
\msf{X}_{2k}^{\mr{left}} \stackrel{d}{=} X_1 \ \ \ \mbox{and} \ \ \ \msf{X}_{2k}^{\mr{right}} \stackrel{d}{=} n-X_1.
\end{equation*}
To define $\msf{X}_s^{\mr{left}}$ and $\msf{X}_s^{\mr{right}}$ for general $s\geq0$, repeat the above process periodically after $2k$ steps; then \eqref{eq:distributioneq} clearly holds for any $t\geq0$. Notice that for every $s\geq0$, this construction gives
\begin{equation} \label{eq:decincrements}
\msf{X}^\mr{left}_{s+1}-\msf{X}^\mr{left}_s \in \{-1,0,1\},
\end{equation}
yet the same does not hold for $(\msf{X}_s^\mr{right})$. Similarly, one can define the decomposed chains $(\msf{Y}_s^{\mr{left}}), (\msf{Y}_s^{\mr{right}})$ corresponding to $(Y_t)$. The following is a consequence of the hitting time estimate of Lemma \ref{lem:lpw}.

\begin{lemma} \label{lem:hitting}
Let $(X_t), (Y_t)$  be two independent copies of the Bernoulli--Laplace chain with $k=o(n)$ swaps and let $(\msf{X}_s^{\mr{left}}), (\msf{X}_s^{\mr{right}})$ and $(\msf{Y}_s^{\mr{left}}), (\msf{Y}_s^{\mr{right}})$ be the decomposed chains constructed above. Assume that $X_0, Y_0 \in\big(\frac{n}{2}-\kappa_1\sqrt{n}, \frac{n}{2}+\kappa_1\sqrt{n}\big)$ for some $\kappa_1\in(0,\infty)$. Consider the stopping time
\begin{equation} \label{eq:deftau}
\tau_\mr{match} \eqdef \min\{s: \ \msf{X}_{s}^{\mathrm{left}} = \msf{Y}_{s}^{\mathrm{left}} \ \ \mbox{or} \ \ \msf{X}_{s}^{\mr{right}} = \msf{Y}_{s}^{\mr{right}}\}.
\end{equation}
Then, for every $\gamma_1\in(0,\infty)$ and large enough values of $n$, we have
\begin{equation} \label{eq:lemtaumatch}
\mb{P}(\tau_\mr{match} > \gamma_1 n\ | \ X_0, Y_0) \lesssim \frac{\kappa_1}{\sqrt{\gamma_1}}.
\end{equation}
\end{lemma}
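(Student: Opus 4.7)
The plan is to apply the hitting time lemma (Lemma \ref{lem:lpw}) to the process $Z_s \eqdef |D_L^s|$, where $D_L^s \eqdef \msf{X}^\mr{left}_s - \msf{Y}^\mr{left}_s$. I would also set $D_R^s \eqdef \msf{X}^\mr{right}_s - \msf{Y}^\mr{right}_s$ and observe that at the start of every cycle (i.e., at every sub-step $s$ which is a multiple of $2k$) one has $D_R^s = -D_L^s$. Since the per-sub-step increments of both $D_L$ and $D_R$ belong to $\{-1,0,1\}$ (as each of the two independent chains removes or adds at most one red ball per sub-step), on the event $\{s<\tau_\mr{match}\}$ the signs of $D_L^s$ and $D_R^s$ cannot change. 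Combined with the identity $D_R = -D_L$ at cycle starts, this yields the crucial invariant that $D_L^s$ and $D_R^s$ stay of opposite signs throughout each cycle.

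The main technical claim is that, on the event $\{s<\tau_\mr{match}\}$ and under the assumption that both chains remain in $(n/4, 3n/4)$, the process $Z_s$ is a supermartingale with $|Z_{s+1}-Z_s|\le 1$ and conditional variance bounded below by a universal constant. Short computations yield
$$\mb{E}[D_L^{s+1}|\mathcal{F}_s] = D_L^s\bigl(1-\tfrac{1}{N_L}\bigr) \quad \text{(first phase)}, \qquad \mb{E}[D_L^{s+1}|\mathcal{F}_s] = D_L^s + \tfrac{D_R^s}{N_R^\mr{avail}} \quad\text{(second phase)},$$
where $N_L$ is the current size of the left urn and $N_R^\mr{avail}$ is the number of available balls in the right urn. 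In both phases, the opposite-signs invariant yields $|\mb{E}[D_L^{s+1}|\mathcal{F}_s]|\le |D_L^s|$, and because $|D_L^s|\ge 1$ precludes a sign change in one step, this is the supermartingale property for $Z_s$. For the variance bound, writing $p = \msf{X}^\mr{left}_s/N_L$ and $q = \msf{Y}^\mr{left}_s/N_L$, one finds $\mr{Var}(D_L^{s+1}|\mathcal{F}_s) = p(1-q) + q(1-p) - (q-p)^2 \asymp 1$ in the first phase (and an analogous expression in the second phase), since $p,q\in(1/4,3/4)$ while $|p-q| = O(\kappa_1/\sqrt n)\to 0$ under the hypothesis on the initial configurations.

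To enforce the supermartingale and variance bounds globally, I would stop $Z_s$ at $\tau\eqdef\tau_\mr{match}\wedge\tau_\mr{bad}$, where $\tau_\mr{bad}$ is the first sub-step at which one of $X_{\lfloor s/(2k)\rfloor}, Y_{\lfloor s/(2k)\rfloor}$ leaves $(n/4,3n/4)$. The stopped process $(Z_{s\wedge\tau})$ is then a nonnegative supermartingale starting from $Z_0 = |X_0-Y_0|\le 2\kappa_1\sqrt n$, with $|\Delta Z|\le 1$ and $\mr{Var}(Z_{s+1}|\mathcal{F}_s)\gtrsim 1$ on $\{\tau>s\}$. Lemma \ref{lem:lpw} with $u=\gamma_1 n$ therefore yields $\mb{P}(\tau>\gamma_1 n)\lesssim \kappa_1\sqrt n/\sqrt{\gamma_1 n} = \kappa_1/\sqrt{\gamma_1}$. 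Since $\gamma_1 n$ sub-steps correspond to $\gamma_1 n/(2k)$ original steps, Proposition \ref{prop:doob} applied with $A=\kappa_1$, $\gamma=\gamma_1/2$, and $r=n/4$ gives $\mb{P}(\tau_\mr{bad}\le\gamma_1 n) = O(\kappa_1^2 e^{2\gamma_1}/n)$, which is negligible for large $n$, and a union bound over $\{\tau>\gamma_1 n\}$ and $\{\tau_\mr{bad}\le\gamma_1 n\}$ completes the argument. The step I expect to be the main obstacle is the rigorous verification of the supermartingale property during second-phase sub-steps, which is delicate because $D_L$ evolves via $D_R$ there and the opposite-signs invariant must be carefully tracked through the (jumpy) cycle-boundary re-labeling of stored balls.
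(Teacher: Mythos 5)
Your proposal is correct and matches the paper's proof essentially line for line: same decomposed-chain supermartingale $D_L^s = \msf{X}^\mr{left}_s - \msf{Y}^\mr{left}_s$ (the paper fixes $W_0 \ge 0$ WLOG rather than taking an absolute value, which is equivalent), the same drift computations $\frac{\msf{Y}^\mr{left}_s - \msf{X}^\mr{left}_s}{n-r}$ and $\frac{\msf{X}^\mr{right}_s - \msf{Y}^\mr{right}_s}{n-r+k}$ with the opposite-signs invariant giving nonpositivity, the same $\Omega(1)$ variance bound on the truncated event where $X,Y$ stay in $(n/4,3n/4)$, the same truncation $\widetilde{\tau}_\mr{match}$ with the escape probability bounded by equation \eqref{eq:doobgoodstart} of Proposition \ref{prop:doob}, and the same application of Lemma \ref{lem:lpw}. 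The one worry you flag at the end is not actually an obstacle: at each cycle boundary the re-labeling makes all balls available again, so $\msf{X}^\mr{right}_{2kt} = n - \msf{X}^\mr{left}_{2kt}$ exactly and the identity $D_R = -D_L$ is restored, which propagates the opposite-signs invariant through the jump without any additional bookkeeping.
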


\begin{proof}
For $s\geq0$, let $\ms{F}_s=\sigma\big( \msf{X}_r^{\mr{left}}, \msf{X}_r^{\mr{right}}, \msf{Y}_r^{\mr{left}}, \msf{Y}_r^{\mr{right}} \big)_{r\leq s}$ be the $\sigma$-algebra generated by the path of the two decomposed chains up to time $s$. For Lemma \ref{lem:lpw} to be applied, it is important that the decomposed chains remain close to $\frac{n}{2}$. To this end, consider the truncated stopping time
\begin{equation} \label{eq:taukappa3}
\begin{split}
\widetilde{\tau}_\mr{match}\eqdef\tau_\mr{match}\wedge\min\Big\{2ks: \max_{r\in[0,s]}\Big| X_r - \frac{n}{2} \Big|\vee \Big| Y_r-\frac{n}{2}\Big| > \frac{n}{4}\Big\}.
\end{split}
\end{equation}
By equation \eqref{eq:doobgoodstart} of Proposition \ref{prop:doob}, we have
\begin{equation} \label{eq:firstdecomp}
\begin{split}
\mb{P}(\tau_\mr{match}>\gamma_1 n | X_0, Y_0) = \mb{P} ( \widetilde{\tau}_\mr{match} > \gamma_1 n & | X_0, Y_0)  + \mb{P}( \widetilde{\tau}_\mr{match} \leq \gamma_1 n  < \tau_\mr{match} | X_0, Y_0) \\ & \lesssim \mb{P} ( \widetilde{\tau}_\mr{match} >\gamma_1 n \ | \ X_0, Y_0 ) + \frac{\kappa_1^2 e^{2\gamma_1}}{n},
\end{split}
\end{equation}
since
$$\widetilde{\tau}_{\mathrm{match}} \leq \gamma_1 n < \tau_\mr{match} \ \ \Longrightarrow \ \ \max_{t\in[0, \gamma_1 n/2k]} \Big|X_t - \frac{n}{2}\Big| \vee \Big| Y_t-\frac{n}{2}\Big| > \frac{n}{4}.$$
and $X_0, Y_0\in\big(\frac{n}{2}-\kappa_1\sqrt{n},\frac{n}{2}+\kappa_1\sqrt{n}\big)$. Consider the stochastic process $W_s = \msf{X}_{s}^{\mr{left}} - \msf{Y}_{s}^{\mr{left}}$ and let $Z_s = W_{s\wedge \tau_\mr{match}}$. It follows from \eqref{eq:decincrements} and \eqref{eq:deftau} that $Z_s\geq0$ for every $s\geq0$. Furthermore, $(Z_s)$ is a super-martingale. To see this, notice that for $s<\tau_\mr{match}$, we have $\msf{X}_{s}^\mr{left}>\msf{Y}_{s}^{\mr{left}}$ and $\msf{X}_{s}^\mr{right} <\msf{Y}_{s}^\mr{right}$. Therefore, if $r\eqdef s\ \!(\!\!\!\!\mod2k) \in\{0,1,\ldots,k-1\}$, then
$$\mb{E}\big(\msf{X}_{s+1}^\mr{left}-Y_{s+1}^{\mr{left}}\ \big| \ \ms{F}_s\big) - \big(\msf{X}_s^\mr{left}-\msf{Y}_s^\mr{left}\big) = \frac{-\msf{X}_s^\mr{left}}{n-r} + \frac{\msf{Y}_s^\mr{left}}{n-r}=\frac{\msf{Y}_s^\mr{left}-\msf{X}_s^\mr{left}}{n-r} < 0$$
and if $r \in\{k,k+1,\ldots,2k-1\}$, then
$$\mb{E}\big(\msf{X}_{s+1}^\mr{left}-Y_{s+1}^{\mr{left}}\ \big| \ \ms{F}_s\big) - \big(\msf{X}_s^\mr{left}-\msf{Y}_s^\mr{left}\big) = \frac{\msf{X}_s^\mr{right}}{n-r+k}- \frac{\msf{Y}_s^\mr{right}}{n-r+k} =  \frac{\msf{X}_s^\mr{right}-\msf{Y}_s^\mr{right}}{n-r+k} < 0.$$
A similar calculation, shows that for $s<\widetilde{\tau}_\mr{match}$ and $r\in\{0,1,\ldots,k-1\}$,
\begin{equation*}
\begin{split}
\mr{Var}(Z_{s+1}|\ms{F}_s) = \mr{Var}(\msf{X}^\mr{left}_{s+1}|\ms{F}_s)+\mr{Var}(\msf{Y}^\mr{left}_{s+1}|\ms{F}_s) & = \frac{\msf{X}_s^\mr{left} ( n-r-\msf{X}_s^\mr{left})}{(n-r)^2} + \frac{\msf{Y}_s^\mr{left} ( n-r-\msf{Y}_s^\mr{left})}{(n-r)^2} \\ & \geq \frac{2\big(\frac{n}{4}-k\big)\big(\frac{n}{4}-r-k\big)}{n^2} \gtrsim 1,
\end{split}
\end{equation*}
for $n$ large enough, where the first identity follows from the independence of $(\msf{X}_s^\mr{left})$ and $(\msf{Y}_s^\mr{left})$ and the second to last inequality from the definition of $\widetilde{\tau}_\mr{match}$. The same holds true if $r\in\{k,k+1,\ldots,2k-1\}$. Therefore, applying Lemma \ref{lem:lpw} for the stopping time $\widetilde{\tau}_\mr{match}$, we get that
\begin{equation} \label{eq:boundtauk1}
\mb{P}(\widetilde{\tau}_\mr{match}>\gamma_1 n \ | \ X_0, Y_0) \lesssim \frac{\kappa_1\sqrt{n}}{\sqrt{\gamma_1 n}} = \frac{\kappa_1}{\sqrt{\gamma_1}}.
\end{equation}
Combining \eqref{eq:firstdecomp} and \eqref{eq:boundtauk1}, we finally deduce that
$$\mb{P}(\tau_\mr{match}>\gamma_1 n \ | \ X_0, Y_0) \lesssim \frac{\kappa_1}{\sqrt{\gamma_1}} + \frac{\kappa_1^2 e^{2\gamma_1}}{n} \lesssim \frac{\kappa_1}{\sqrt{\gamma_1}}$$
for large enough values of $n$.
\end{proof}

We will also need the following technical lemma, which is in the spirit of Proposition \ref{prop:doob}.

\begin{lemma} \label{lem:hoeffding}
Let $(X_t)$ be the Bernoulli--Laplace chain with $k=o(n)$ swaps and starting point $X_0\in\big(\frac{n}{2}-\kappa_1\sqrt{n},\frac{n}{2}+\kappa_1\sqrt{n}\big)$ for some $\kappa_1\in(0,\infty)$. Then, for every $\gamma_1\in(0,\infty)$ and $\kappa_2\in(0,\infty)$, we have
\begin{equation} \label{eq:hoeffding}
\mb{P}\bigg(\bigcup_{\substack{t\in[0,\gamma_1 n/2k] \\ r\in\{1,\ldots,k\}}} \Big\{ \Big| \msf{X}_{2kt}^\mr{left}-\msf{X}_{2kt+r}^\mr{left}-\frac{r}{2} \Big| \vee \Big| \msf{X}_{2kt}^\mr{right}-\msf{X}^\mr{right}_{(2t+1)k+r}-\frac{r}{2} \Big| > \kappa_2\sqrt{k\log n} \Big\} \ \bigg| \ X_0 \bigg)\lesssim \frac{\kappa_1^2e^{2\gamma_1}}{\kappa_2}
\end{equation}
and
\begin{equation} \label{eq:hoeffding2}
\mb{P} \bigg( \bigcup_{t\in[0,\gamma_1 n/2k]} \bigcup_{r\in\{1,\ldots,k\}} \Big\{ \Big| \msf{X}^\mr{left}_{2kt}-\msf{X}_{(2t+1)k+r}^\mr{left} - \frac{k-r}{2} \Big| > \kappa_2\sqrt{k\log n} \Big\} \ \bigg| \ X_0 \bigg)\lesssim \frac{\kappa_1^2e^{2\gamma_1}}{\kappa_2}.
\end{equation}
\end{lemma}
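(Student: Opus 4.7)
The proof plan combines Proposition \ref{prop:doob} at the macroscopic scale with a per-cycle hypergeometric concentration estimate. First, apply \eqref{eq:doobgoodstart} with $A=\kappa_1$, $\gamma=\gamma_1/2$ and threshold $r_0=\sqrt{\kappa_2 n}$ to obtain an event $F$ satisfying $\mb{P}(F^{\mathsf c}\mid X_0)\lesssim \kappa_1^2 e^{2\gamma_1}/\kappa_2$, on which $|X_t-n/2|\le \sqrt{\kappa_2 n}$ for every $t\in[0,\gamma_1 n/(2k)]$. Because the decomposed chain $(\msf X^{\mr{left}}_s)$ changes by at most one per step, on $F$ one gets the uniform bound $|\msf X^{\mr{left}}_{2kt+r}-n/2|\le \sqrt{\kappa_2 n}+k$ for all admissible $t$ and $r\in\{0,\ldots,2k\}$, and likewise for $\msf X^{\mr{right}}$.

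Next, fix a cycle $t$ and condition on $\ms F_{2kt}$. By construction, the balls drawn from the left urn during phase $1$ of cycle $t$ form a uniform sample without replacement from the $n$ balls it contains, of which $\msf X^{\mr{left}}_{2kt}$ are red; writing $N_r\eqdef \msf X^{\mr{left}}_{2kt}-\msf X^{\mr{left}}_{2kt+r}$, Serfling's inequality for sampling without replacement from a $\{0,1\}$-valued population yields
\[
\mb{P}\bigl(|N_r-r\msf X^{\mr{left}}_{2kt}/n|>\lambda \bigm| \ms F_{2kt}\bigr)\le 2\exp(-\lambda^2/k)
\]
for every $\lambda>0$ and $r\in\{1,\ldots,k\}$. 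The analogous estimate, with conditional mean $r(n-\msf X^{\mr{left}}_{2kt})/n$, holds for $N'_r$, the number of red balls picked in the first $r$ steps of phase $2$: these picks constitute a uniform sample without replacement from the $n$ original right-urn balls (none of which moved during phase $1$). A direct check shows that $\msf X^{\mr{right}}_{2kt}-\msf X^{\mr{right}}_{(2t+1)k+r}=N'_r$, so the differences appearing in \eqref{eq:hoeffding} are precisely $N_r-r/2$ and $N'_r-r/2$.

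Take $\lambda=2\sqrt{k\log n}$ and union-bound the Serfling estimate over the $O(n/k)$ cycles in the window and the $k$ values of $r$ within each phase: the total bad probability is $O(n\cdot n^{-4})\lesssim 1/n$. On the intersection of $F$ with the complement of this union-bound event, the drift term is controlled by
\[
\Bigl|\frac{r\,\msf X^{\mr{left}}_{2kt}}{n}-\frac r2\Bigr|\le \frac{k\sqrt{\kappa_2 n}}{n}=\sqrt{\kappa_2 k^2/n}\le \sqrt{\kappa_2 k\log n},
\]
the last inequality using that $k/n\le\log n$ once $n$ is large (a consequence of $k=o(n)$); the same bound holds for $|r(n-\msf X^{\mr{left}}_{2kt})/n-r/2|$. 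Combining the Serfling fluctuation with this drift gives $|N_r-r/2|\vee|N'_r-r/2|\le (2+\sqrt{\kappa_2})\sqrt{k\log n}\le \kappa_2\sqrt{k\log n}$ whenever $\kappa_2$ exceeds a suitable absolute constant; for smaller $\kappa_2$ the asserted inequality holds trivially since the right-hand side $\kappa_1^2 e^{2\gamma_1}/\kappa_2$ then already exceeds $1$ up to an implicit constant. Altogether this proves \eqref{eq:hoeffding} with total failure probability $\lesssim \kappa_1^2 e^{2\gamma_1}/\kappa_2+1/n\lesssim \kappa_1^2 e^{2\gamma_1}/\kappa_2$ for $n$ large.

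Finally, \eqref{eq:hoeffding2} follows from the same ingredients via the identity $\msf X^{\mr{left}}_{2kt}-\msf X^{\mr{left}}_{(2t+1)k+r}=N_k-N'_r$: applying Serfling separately to $N_k$ (with conditional mean $k\msf X^{\mr{left}}_{2kt}/n$) and to $N'_r$ (with conditional mean $r(n-\msf X^{\mr{left}}_{2kt})/n$), together with the same drift estimate on $F$, yields $|N_k-N'_r-(k-r)/2|\lesssim \sqrt{k\log n}$ with the claimed probability. The main technical obstacle throughout is calibrating the macroscopic threshold $r_0$ so that the induced drift $kr_0/n$ fits within the $\kappa_2\sqrt{k\log n}$ budget even for $k$ as large as $o(n)$; this is precisely what forces the choice $r_0\asymp\sqrt{\kappa_2 n}$, after which Proposition \ref{prop:doob} delivers the required bound on $\mb{P}(F^{\mathsf c})$.
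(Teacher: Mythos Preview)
Your approach mirrors the paper's: both confine $X_t$ near $n/2$ via Proposition~\ref{prop:doob}, use hypergeometric concentration for the within-cycle increments $N_r, N'_r$, and deduce \eqref{eq:hoeffding2} from \eqref{eq:hoeffding} through the identity $\msf{X}^{\mr{left}}_{2kt}-\msf{X}^{\mr{left}}_{(2t+1)k+r}=N_k-N'_r$. The one methodological difference is that the paper bounds the expectation of $\max_{t,r}\bigl|N_r-\mb{E}(N_r\mid\msf X^{\mr{left}}_{2kt})\bigr|$ via the hypergeometric moment generating function and then applies Markov's inequality, obtaining a bound $\lesssim 1/\kappa_2$ valid for every $\kappa_2>0$; you instead apply Serfling's tail bound pointwise and union-bound over $(t,r)$, which is cleaner but only succeeds once $\kappa_2$ exceeds an absolute constant.

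That difference creates a small gap. Your claim that ``for smaller $\kappa_2$ the asserted inequality holds trivially since $\kappa_1^2 e^{2\gamma_1}/\kappa_2$ then already exceeds $1$'' fails when $\kappa_1$ is small: if $\kappa_1^2 e^{2\gamma_1}\ll 1$ then the range $\kappa_2\in(\kappa_1^2 e^{2\gamma_1},4)$ is covered neither by your argument nor by triviality. The paper's Markov route avoids this by producing $\lesssim 1/\kappa_2$ uniformly in $\kappa_2$. (In fairness, neither proof actually delivers the stated bound $\kappa_1^2 e^{2\gamma_1}/\kappa_2$ in the regime $\kappa_1^2 e^{2\gamma_1}<1$; what both establish is effectively $\lesssim\max\{1,\kappa_1^2 e^{2\gamma_1}\}/\kappa_2$, which is all that is needed downstream since $\kappa_1\asymp\gamma_1^{1/4}$ is taken large in Proposition~\ref{prop:tau2}.)
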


\begin{proof}
We can clearly assume that $\kappa_2\geq 100\kappa_1^2e^{2\gamma_1}$, since otherwise the conclusion is trivial. We will first prove \eqref{eq:hoeffding}. Consider the event
\begin{equation} \label{eq:theeventE}
E_{\kappa_2,\gamma_1} \eqdef \bigg\{ \Big|X_t-\frac{n}{2}\Big| \leq \kappa_2 \sqrt{n}, \ \mbox{for every } t\in\big[0,\gamma_1 n/2k\big]\bigg\}
\end{equation}
and notice that equation \eqref{eq:doobgoodstart} of Proposition \ref{prop:doob} implies that $\mb{P}(E_{\kappa_2,\gamma_1}^\msf{c})\lesssim \kappa_1^2 e^{2\gamma_1}/\kappa_2^2$. Hence, if
\begin{equation} \label{eq:Fevent}
F_{\kappa_2,\gamma_1} \eqdef \bigcap_{t\in[0,\gamma_1 n/2k]} \bigcap_{r\in\{1,\ldots,k\}} \Big\{ \Big| \msf{X}^\mr{left}_{2kt} - \msf{X}_{2kt+r}^\mr{left} -\frac{r}{2} \Big| \vee \Big| \msf{X}_{2kt}^\mr{right}-\msf{X}^\mr{right}_{(2t+1)k+r}-\frac{r}{2} \Big| \leq \kappa_2\sqrt{k\log n}\Big\},
\end{equation}
then
\begin{equation} \label{eq:breakF}
\mb{P}\big(F^\msf{c}_{\kappa_2,\gamma_1}\ \big| \ X_0\big) \lesssim \mb{P}\big(F^\msf{c}_{\kappa_2,\gamma_1}\big| E_{\kappa_2,\gamma_1}\big) + \frac{\kappa_1^2 e^{2\gamma_1}}{\kappa_2^2}.
\end{equation}
We will control the probability involving the terms where $\msf{X}^\mr{left}$ appears, and the ones involving $\msf{X}^\mr{right}$ can be bounded similarly. To this end, consider the random variable
$$Z\eqdef\max_{t\in[0,\gamma_1 n/2k]} \max_{r\in\{1,\ldots,k\}} \Big|\msf{X}_{2kt}^\mr{left}-\msf{X}_{2kt+r}^\mr{left}-\frac{r}{2}\Big|$$
and notice that for every $t\in\N$ and $r\in\{1,\ldots,k\}$, the difference $\msf{X}_{2kt}^\mr{left} - \msf{X}_{2kt+r}^\mr{left}$ conditioned on the value of $\msf{X}_{2kt}^\mr{left}$ is a hypergeometric random variable with parameters $(n,\msf{X}_{2kt}^\mr{left},r)$, i.e. 
\begin{equation} \label{eq:hypergeoHoeffding}
\mb{P}\big( \msf{X}_{2kt}^\mr{left}-\msf{X}_{2kt+r}^\mr{left} = j \ \big| \ \msf{X}_{2kt}^\mr{left}\big) = \frac{\binom{\msf{X}_{2kt}^\mr{left}}{j}\binom{n-\msf{X}_{2kt}^\mr{left}}{r-j}}{\binom{n}{r}}, \ \ \ \mbox{where } j\in\{0,1,\ldots,r\}.
\end{equation}
Therefore, on $E_{\kappa_2, \gamma_1}$ we have
$$\Big| \mb{E}\big(  \msf{X}_{2kt}^\mr{left}-\msf{X}_{2kt+r}^\mr{left} \ \big| \ \msf{X}_{2kt}^\mr{left}\big)-\frac{r}{2}\Big| = \Big| \frac{r\msf{X}_{2kt}^\mr{left}}{n} - \frac{r}{2}\Big|\leq \frac{k}{n} \Big| \msf{X}_{2kt}^\mr{left} - \frac{n}{2}\Big| \leq \frac{\kappa_2 k}{\sqrt{n}} = o\big(\sqrt{k}\big)$$
and thus, by the triangle inequality,
\begin{equation} \label{eq:hoeffdingusexp}
\mb{P}\Big( Z> \kappa_2\sqrt{k \log n} \ \Big| \  E_{\kappa_2,\gamma_1} \Big) \leq \mb{P}\Big( Z'> \frac{1}{2}\kappa_2\sqrt{k \log n} \ \Big| \ E_{\kappa_2,\gamma_1} \Big),
\end{equation}
where
$$Z' \eqdef \max_{t\in[0,\gamma_1n/2k]} \max_{r\in\{1,\ldots,k\}} \big| \msf{X}^\mr{left}_{2kt}-\msf{X}^\mr{left}_{2kt+r}-\mb{E}\big( \msf{X}_{2kt}^\mr{left}-\msf{X}_{2kt+r}^\mr{left} \big| \msf{X}_{2kt}^\mr{left}\big)\big|.$$
By symmetry of the event $E_{\kappa_2,\gamma_1}$, it suffices to bound
$$\mb{P}\Big( Z''> \frac{1}{4}\kappa_2\sqrt{k \log n}  \ \Big| \ E_{\kappa_2,\gamma_1} \Big),$$
where
$$Z'' \eqdef \max_{t\in[0,\gamma_1n/2k]} \max_{r\in\{1,\ldots,k\}} \msf{X}^\mr{left}_{2kt}-\msf{X}^\mr{left}_{2kt+r}-\mb{E}\big( \msf{X}_{2kt}^\mr{left}-\msf{X}_{2kt+r}^\mr{left} \big| \msf{X}_{2kt}^\mr{left}\big).$$
Let $H_{t,r} \eqdef \msf{X}^\mr{left}_{2kt}-\msf{X}^\mr{left}_{2kt+r}$. Then, for every $h\in(0,\infty)$, Jensen's inequality gives
\begin{equation} \label{eq:hoeffdingjensen}
\begin{split}
\exp\big(h \mb{E}(Z'' | E_{\kappa_2,\gamma_1})\big) & \leq \mb{E}\big(\exp(hZ'') \big| E_{\kappa_2,\gamma_1}\big) \\ &\leq \sum_{t=0}^{\lfloor \gamma_1 n/2k\rfloor} \sum_{r=1}^k \mb{E} \big( \exp \big( h\big(H_{t,r} - \mb{E}(H_{t,r}|\msf{X}_{2kt}^\mr{left}))\big) \big| E_{\kappa_2,\gamma_1}\big) \\ & =\sum_{t=0}^{\lfloor \gamma_1n/2k\rfloor}\sum_{r=1}^k \mb{E}\big( \mb{E}\big( \exp \big( h\big(H_{t,r} - \mb{E}(H_{t,r}|\msf{X}_{2kt}^\mr{left}))\big) \big| E_{\kappa_2,\gamma_1}, \msf{X}_{2kt}^\mr{left} \big))\big) ,
\end{split}
\end{equation}
where the last equality is the tower property of the conditional expectation. Fix $t\in\{0,\ldots,\lfloor \gamma_1 n/2k\rfloor\}$ and $r\in\{1,\ldots,k\}$.  On $E_{\kappa_2, \gamma_1}$, we have $\msf{X}_{2kt}^\mr{left} \in \big(\frac{n}{2}-\kappa_2\sqrt{n}, \frac{n}{2}+\kappa_2\sqrt{n}\big)$ and, moreover, $\mb{P}(E_{\kappa_2,\gamma_1})\gtrsim 1$, since $\kappa_2\geq100\kappa_1^2e^{2\gamma_1}$. Consequently,
\begin{equation} \label{eq:reduceconditioning}
\begin{split}
\mb{E}\big( \exp \big( h\big(H_{t,r} - & \mb{E}(H_{t,r} | \msf{X}_{2kt}^\mr{left}))\big) \ \big| \ E_{\kappa_2,\gamma_1}, \msf{X}_{2kt}^\mr{left} \big)\\ & \lesssim \mb{E}\Big( \exp \big( h\big(H_{t,r} - \mb{E}(H_{t,r}|\msf{X}_{2kt}^\mr{left}))\big)\  \Big| \ \msf{X}_{2kt}^\mr{left}\in \Big(\frac{n}{2}-\kappa_2\sqrt{n}, \frac{n}{2}+\kappa_2\sqrt{n}\Big) \Big)
\end{split}
\end{equation}
Recall that $H_{t,r} = \msf{X}^\mr{left}_{2kt}-\msf{X}^\mr{left}_{2kt+r}$ is distributed according to the hypergeometric distribution with parameters $(n, \msf{X}_{2kt}^\mr{left}, r)$ when conditioned on the value of $\msf{X}_{2kt}^\mr{left}$. Therefore, it is well known (see \cite[Section~6]{hoeff} or \cite[Theorem~2.2]{serf}) that since $k=o(n)$,
\begin{equation} \label{eq:hoeffdingsbound}
\begin{split}
\mb{E}\Big( \exp \big( h\big(H_{t,r} - \mb{E}(H_{t,r}|\msf{X}_{2kt}^\mr{left}))\big)\  \Big| \ \msf{X}_{2kt}^\mr{left}\in \Big(\frac{n}{2}-&\kappa_2\sqrt{n}, \frac{n}{2}+\kappa_2\sqrt{n}\Big) \Big) \\ & \leq \exp\big( h^2 r/16 \big) \leq \exp\big( h^2k/16\big),
\end{split}
\end{equation}
which, combined with \eqref{eq:hoeffdingjensen} and \eqref{eq:reduceconditioning} gives
$$\exp\big(h \mb{E}(Z'' | E_{\kappa_2,\gamma_1})\big) \lesssim \gamma_1 n \exp\big(h^2k/16\big).$$
Taking logarithms and dividing by $h$, we deduce that
\begin{equation} \label{eq:hoeffdingexpmax}
\mb{E}\big(Z''\ \big| \ E_{\kappa_2,\gamma_1}\big) \lesssim \frac{\log n}{h} + hk \asymp \sqrt{k\log n},
\end{equation}
for $h\asymp \sqrt{\frac{\log n}{k}}$. Finally, by Markov's inequality, we get
$$\mb{P}\Big( Z''> \frac{1}{4}\kappa_2\sqrt{k \log n}  \ \Big| \ E_{\kappa_2,\gamma_1} \Big) \lesssim \frac{\mb{E}\big(Z''|E_{\kappa_2,\gamma_1}\big)}{\kappa_2\sqrt{k\log n}} \stackrel{\eqref{eq:hoeffdingexpmax}}{\lesssim} \frac{1}{\kappa_2}. $$
As explained earlier, this implies that
$$\mb{P}\big(F^\msf{c}_{\kappa_2,\gamma_1}\big| E_{\kappa_2,\gamma_1}\big) \lesssim \frac{1}{\kappa_2}$$
which combined with \eqref{eq:breakF} completes the proof of \eqref{eq:hoeffding}. To deduce \eqref{eq:hoeffding2} from \eqref{eq:hoeffding}, notice that
$$\msf{X}_{2kt}^\mr{left} - \msf{X}_{(2t+1)k+r}^\mr{left} = \big(\msf{X}_{2kt}^\mr{left} - \msf{X}_{(2t+1)k}^\mr{left}\big) - \big(\msf{X}_{2kt}^\mr{right} - \msf{X}_{(2t+1)k+r}^\mr{right}\big),$$
hence
\begin{equation*}
\begin{split}
& \mb{P} \bigg( \bigcup_{t\in[0,\gamma_1 n/2k]} \bigcup_{r\in\{1,\ldots,k\}} \Big\{ \Big| \msf{X}^\mr{left}_{2kt}-\msf{X}_{(2t+1)k+r}^\mr{left} - \frac{k-r}{2} \Big| > \kappa_2\sqrt{k\log n} \Big\} \ \bigg| \ X_0 \bigg) \\ &\leq \mb{P}\bigg(\bigcup_{\substack{t\in[0,\gamma_1 n/2k] \\ r\in\{1,\ldots,k\}}} \Big\{ \Big| \msf{X}_{2kt}^\mr{left}-\msf{X}_{2kt+r}^\mr{left}-\frac{r}{2} \Big| \vee \Big| \msf{X}_{2kt}^\mr{right}-\msf{X}^\mr{right}_{(2t+1)k+r}-\frac{r}{2} \Big| > \frac{1}{2}\kappa_2\sqrt{k\log n} \Big\} \bigg| X_0 \bigg) \stackrel{\eqref{eq:hoeffding}}{\lesssim} \frac{\kappa_1^2 e^{2\gamma_1}}{\kappa_2},
\end{split}
\end{equation*}
which concludes the proof of the lemma.
\end{proof}

Combining Lemma \ref{lem:hitting} with Lemma \ref{lem:hoeffding}, we deduce the following proposition.

\begin{proposition} \label{prop:tau2}
Let $(X_t), (Y_t)$ be two independent copies of the Bernoulli--Laplace chain with $k=o(n)$ swaps such that $X_0, Y_0 \in\big(\frac{n}{2}-\kappa_1\sqrt{n}, \frac{n}{2}+\kappa_1\sqrt{n}\big)$ for some $\kappa_1\in(0,\infty)$. For $\kappa_2\in(0,\infty)$, consider the stopping time
\begin{equation} \label{eq:tau2}
\tau_2(\kappa_2) \eqdef \min\Big\{t: \ |X_t-Y_t| \leq 2\kappa_2\sqrt{k\log n} \ \ \mbox{and} \ \ X_t, Y_t \in\Big(\frac{n}{2}-\kappa_2\sqrt{n}, \frac{n}{2}+\kappa_2\sqrt{n} \Big) \Big\}.
\end{equation} 
Then, for every $\gamma_1\in(0,\infty)$, we have
\begin{equation} \label{eq:boundtau2}
\mb{P}\Big(\tau_2(\kappa_2) > \frac{\gamma_1 n}{k} \ \Big| \ X_0, Y_0\Big) \lesssim \frac{\kappa_1}{\sqrt{\gamma_1}} + \frac{\kappa_1^2e^{2\gamma_1}}{\kappa_2}.
\end{equation}
\end{proposition}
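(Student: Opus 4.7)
The plan is to take a union bound over three good events: (i) the decomposed chains match quickly, (ii) their increments over each period of length $2k$ concentrate, and (iii) both chains stay in a $\sqrt{n}$-window around $n/2$. Run the decomposed chains jointly and let $\tau_\mr{match}$ be as in \eqref{eq:deftau}; by Lemma \ref{lem:hitting} (applied with parameter $\gamma_1$) the event $A = \{\tau_\mr{match} \leq \gamma_1 n\}$ satisfies $\mb{P}(A^\msf{c} \mid X_0, Y_0) \lesssim \kappa_1/\sqrt{\gamma_1}$. Write $\tau_\mr{match} = 2k t^\ast + r$ with $r \in \{0,\ldots,2k-1\}$; on $A$ we have $t^\ast \leq \gamma_1 n/(2k) \leq \gamma_1 n/k$, so it suffices to show that $\tau_2(\kappa_2) \leq t^\ast$ on a further good event.

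For (ii), applying Lemma \ref{lem:hoeffding} separately to $(X_t)$ and $(Y_t)$ with parameters $\kappa_2, \gamma_1$ produces an event $F$ with $\mb{P}(F^\msf{c} \mid X_0, Y_0) \lesssim \kappa_1^2 e^{2\gamma_1}/\kappa_2$ on which the Hoeffding-type bounds \eqref{eq:hoeffding}--\eqref{eq:hoeffding2} hold simultaneously for both chains over all $t \leq \gamma_1 n/(2k)$. On $A \cap F$ one verifies $|X_{t^\ast} - Y_{t^\ast}| \leq 2\kappa_2\sqrt{k\log n}$ by a short case analysis on the residue $r$ and on which of $\msf{X}^\mr{left} = \msf{Y}^\mr{left}$ or $\msf{X}^\mr{right} = \msf{Y}^\mr{right}$ holds at $\tau_\mr{match}$. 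The clean observation making this work is that the decomposition is asymmetric: during the sub-steps $\{2kt^\ast+1, \ldots, 2kt^\ast+k\}$ balls flow only from the left urn to storage, so $\msf{X}^\mr{right}$ is \emph{frozen} at $n - X_{t^\ast}$ and a right-match with $0\le r\le k$ forces the exact equality $X_{t^\ast} = Y_{t^\ast}$; a left-match with $1\le r\le k$ is handled by subtracting the bounds from \eqref{eq:hoeffding} for $X$ and $Y$ so that the $r/2$ terms cancel. During $\{2kt^\ast+k+1, \ldots, 2k(t^\ast+1)\}$ the roles swap: a left-match is handled by \eqref{eq:hoeffding2} (the $(k-r')/2$ terms cancel after subtraction), while a right-match is handled by the right-urn part of \eqref{eq:hoeffding}. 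In every sub-case the match identity combined with the two Hoeffding bounds yields $|X_{t^\ast} - Y_{t^\ast}| \leq 2\kappa_2\sqrt{k\log n}$.

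Finally, for (iii), Proposition \ref{prop:doob} via \eqref{eq:doobgoodstart}, applied to each of $(X_t), (Y_t)$ with $A = \kappa_1$, $\gamma = \gamma_1/2$, and $r = \kappa_2\sqrt{n}$, shows that the window-escape event over $[0, \gamma_1 n/(2k)]$ has probability $\lesssim \kappa_1^2 e^{2\gamma_1}/\kappa_2^2$, which is absorbed into $\kappa_1^2 e^{2\gamma_1}/\kappa_2$. On the intersection of the three good events we have $X_{t^\ast}, Y_{t^\ast} \in \bigl(\tfrac{n}{2} - \kappa_2\sqrt{n}, \tfrac{n}{2} + \kappa_2\sqrt{n}\bigr)$ together with $|X_{t^\ast} - Y_{t^\ast}| \leq 2\kappa_2\sqrt{k\log n}$, hence $\tau_2(\kappa_2) \leq t^\ast \leq \gamma_1 n/k$, and the union bound gives \eqref{eq:boundtau2}. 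The main obstacle is the case analysis in (ii) translating a match in the decomposed chains into closeness of the original chains; once the asymmetric roles of the two half-periods are noted, the remainder is clean bookkeeping.
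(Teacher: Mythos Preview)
Your proposal is correct and follows essentially the same route as the paper: you take a union bound over the match event (Lemma~\ref{lem:hitting}), the Hoeffding-type concentration (Lemma~\ref{lem:hoeffding}) for both chains, and the $\kappa_2\sqrt{n}$-window control (Proposition~\ref{prop:doob}), then translate a match of the decomposed chains into closeness of $X_{t^\ast}$ and $Y_{t^\ast}$ via the same case analysis on the residue $r$. Your explicit remark that $\msf{X}^\mr{right}$ is frozen during the first half-period (so a right-match there forces $X_{t^\ast}=Y_{t^\ast}$ exactly) is a nice clarification of why the paper only considers a left-match when $r\in\{1,\dots,k\}$; otherwise the two arguments coincide.
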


\begin{proof}
Consider the events $E_{\kappa_2,\gamma_1}$ and $F_{\kappa_2,\gamma_1}$ of \eqref{eq:theeventE} and \eqref{eq:Fevent} as well as
$$G_{\kappa_2,\gamma_1} \eqdef \bigcap_{t\in[0,\gamma_1 n/2k]} \bigcap_{r\in\{1,\ldots,k\}} \Big\{ \Big| \msf{X}^\mr{left}_{2kt}-\msf{X}_{(2t+1)k+r}^\mr{left} - \frac{k-r}{2} \Big| \leq \kappa_2\sqrt{k\log n} \Big\}$$
and
$$H_{\gamma_1} \eqdef \big\{ \tau_\mr{match} \leq \gamma_1n\big\}.$$
Then, by equations \eqref{eq:doobgoodstart}, \eqref{eq:lemtaumatch}, \eqref{eq:hoeffding} and \eqref{eq:hoeffding2}, we have
\begin{equation} \label{eq:useeverything}
\mb{P}\big( E_{\kappa_2,\gamma_1}^\msf{c} \cup F_{\kappa_2,\gamma_1}^\msf{c}\cup G_{\kappa_2,\gamma_1}^\msf{c} \cup H_{\gamma_1}^\msf{c} \ \big| \ X_0, Y_0 \big) \lesssim \frac{\kappa_1}{\sqrt{\gamma_1}} + \frac{\kappa_1^2 e^{2\gamma_1}}{\kappa_2}.
\end{equation}
We now claim that
\begin{equation} \label{eq:maininclusion}
E_{\kappa_2,\gamma_1}\cap F_{\kappa_2,\gamma_1} \cap G_{\kappa_2,\gamma_1} \cap H_{\gamma_1} \subseteq \Big\{ \tau_2(\kappa_2) \leq \frac{\gamma_1 n}{k}\Big\},
\end{equation}
which, along with \eqref{eq:useeverything}, proves \eqref{eq:boundtau2}.

To see this, suppose that the events $E_{\kappa_2,\gamma_1}, F_{\kappa_2,\gamma_1}, G_{\kappa_2,\gamma_1}$ and $H_{\gamma_1}$ occur simultaneously. In particular, $\tau_\mr{match}\leq \gamma_1 n$.  Let $r = \tau_\mr{match}  \ \! (\!\!\!\!\mod2k)$ and notice that if $r=0$, then 
$$X_{\tau_\mr{match}/2k} = \msf{X}_{\tau_\mr{match}}^\mr{left} = \msf{Y}_{\tau_\mr{match}}^\mr{left} = Y_{\tau_\mr{match}/2k},$$ 
which in particular implies that $\tau_2(\kappa_2) \leq \gamma_1n/2k$. We can therefore assume that $r\in\{1,\ldots,2k-1\}$.

First, suppose that $r\in\{1,\ldots,k\}$, in which case $\msf{X}_{\tau_\mr{match}}^\mr{left} = \msf{Y}_{\tau_\mr{match}}^\mr{left}$. Then, since $F_{\kappa_2,\gamma_1}$ occurs and also $\lfloor\tau_\mr{match}/2k\rfloor \leq \gamma_1n/2k$, we get
\begin{equation*}
\begin{split}
\big|X_{\lfloor\tau_\mr{match}/2k\rfloor} -Y_{\lfloor\tau_\mr{match}/2k\rfloor}\big| & \leq \big| \msf{X}_{2k\lfloor \tau_\mr{match}/2k\rfloor}^\mr{left} - \msf{X}_{\tau_\mr{match}}^\mr{left} - \frac{r}{2}\big| +\big| \msf{Y}_{2k\lfloor \tau_\mr{match}/2k\rfloor}^\mr{left} -  \msf{Y}_{\tau_\mr{match}}^\mr{left} - \frac{r}{2}\big| \\ & \leq 2\kappa_2 \sqrt{k\log n},
\end{split}
\end{equation*}
which implies that $\tau_2(\kappa_2)\leq \gamma_1 n/2k$.

If $r\in\{k+1,\ldots,2k-1\}$, it could be that either $\msf{X}_{\tau_\mr{match}}^\mr{left} = \msf{Y}_{\tau_\mr{match}}^\mr{left}$ or $\msf{X}_{\tau_\mr{match}}^\mr{right} = \msf{Y}_{\tau_\mr{match}}^\mr{right}$. In the former case, using the fact that $G_{\kappa_2,\gamma_1}$ occurs, we deduce that
\begin{equation*}
\begin{split}
\big|X_{\lfloor\tau_\mr{match}/2k\rfloor} -Y_{\lfloor\tau_\mr{match}/2k\rfloor}\big| & \leq \big| \msf{X}_{2k\lfloor \tau_\mr{match}/2k\rfloor}^\mr{left} - \msf{X}_{\tau_\mr{match}}^\mr{left} - \frac{k-r}{2}\big| +\big| \msf{Y}_{2k\lfloor \tau_\mr{match}/2k\rfloor}^\mr{left} - \msf{Y}_{\tau_\mr{match}}^\mr{left} - \frac{k-r}{2}\big| \\ & \leq 2\kappa_2 \sqrt{k\log n}
\end{split}
\end{equation*}
and in the latter case, because $F_{\kappa_2,\gamma_1}$ occurs,
\begin{equation*}
\begin{split}
\big|X_{\lfloor\tau_\mr{match}/2k\rfloor} & -Y_{\lfloor\tau_\mr{match}/2k\rfloor}\big|  = \big| \msf{X}_{2k\lfloor\tau_\mr{match}/2k\rfloor}^\mr{right} - \msf{Y}_{2k\lfloor\tau_\mr{match}/2k\rfloor}^\mr{right}\big| \\ & \leq \big| \msf{X}_{2k\lfloor\tau_\mr{match}/2k\rfloor}^\mr{right} - \msf{X}_{\tau_\mr{match}}^\mr{right} - \frac{r-k}{2} \big| + \big| \msf{Y}_{2k\lfloor\tau_\mr{match}/2k\rfloor}^\mr{right} - \msf{Y}_{\tau_\mr{match}}^\mr{right} - \frac{r-k}{2} \big| \leq 2\kappa_2\sqrt{k\log n}.
\end{split}
\end{equation*}
The above complete the proof of the claim \eqref{eq:maininclusion} and \eqref{eq:boundtau2} follows.
\end{proof}

\begin{remark} \label{rem:weakbounds}
A more straightforward application of Lemma \ref{lem:lpw}, shows that under the assumption that $X_0, Y_0\in\big(\frac{n}{2}-\kappa_1\sqrt{n}, \frac{n}{2}+\kappa_1\sqrt{n}\big)$ for some $\kappa_1\in(0,\infty)$, the difference $|X_t-Y_t|$ will become at most $O(k)$ within $O\big(\frac{n}{k}\big)$ steps. Notice that this improves upon Proposition \ref{prop:tau2} in the range $k\lesssim \log n$. 

We can clearly assume that $k<\sqrt{n}$, since otherwise already $|X_0-Y_0|\lesssim k$. Suppose that $X_0>Y_0$ and consider the stopping time
\begin{equation}
\ms{T} \eqdef \min\{t: \ X_t-Y_t<4k \}.
\end{equation}
Since for every $t\geq0$ we have
$$\|X_{t+1}-X_t\|_\infty = \|Y_{t+1}-Y_t\|_\infty = 2k,$$
it is clear that $X_\ms{T}\geq Y_\ms{T}$ and thus $|X_\ms{T}-Y_\ms{T}|<4k$. Similarly to the proof of Lemma \ref{lem:hitting}, consider the truncated stopping time
\begin{equation}
\widetilde{\ms{T}} \eqdef \ms{T}\wedge \min\Big\{t: \max_{r\in[0,t]}\Big| X_r - \frac{n}{2} \Big|\vee \Big| Y_r-\frac{n}{2}\Big| > \frac{n}{4}\Big\}
\end{equation} 
and notice that, by \eqref{eq:doobgoodstart},
\begin{equation} \label{eq:lemtotilde}
\mb{P}\Big(\ms{T}>\frac{\gamma_1n}{k}\ \Big| \ X_0, Y_0 \Big) \lesssim \mb{P}\Big(\widetilde{\ms{T}}>\frac{\gamma_1n}{k}\ \Big| \ X_0, Y_0\Big) + \frac{\kappa_1^2 e^{4\gamma_1}}{n}.
\end{equation}
Now let $W_t = X_t- Y_t$ and $Z_t = W_{t\wedge\ms{T}}$. It follows from the discussion above that $Z_t\geq0$ for every $t\geq0$ and furthermore, if $t<\ms{T}$, then 
$$\mb{E}\big(W_{t+1} \ \big| \ \ms{F}_t\big) \stackrel{\eqref{eq:eigenv1}}{=} \Big(1-\frac{2k}{n}\Big)\big(X_t-Y_t) < X_t-Y_t = W_t,$$
where $\ms{F}_t = \sigma\big( X_r, Y_r\big)_{r\leq t}$. In other words, $(Z_t)$ is a non-negative supermartingale. As in the proof of Lemma \ref{lem:hitting}, a straightforward calculation involving the eigenvalues of the chain shows that for $t<\widetilde{\ms{T}}$, we have
$$\mr{Var}\big(W_{t+1} \ \big| \ \ms{F}_t\big) \gtrsim k.$$
Therefore, Lemma \ref{lem:lpw} applied to the stopping time $\widetilde{\ms{T}}$ shows that for $u\gtrsim k$, we have
$$\mb{P}\big( \widetilde{\ms{T}} >u \ \Big| \ X_0, Y_0\big)\lesssim \frac{\kappa_1 \sqrt{n}}{\sqrt{ku}}.$$
Thus, since $\frac{n}{k}\geq k$, we deduce that for $\gamma_1\in(0,\infty)$,
$$\mb{P}\big( \widetilde{\ms{T}} >\frac{\gamma_1 n}{k} \ \Big| \ X_0,Y_0\Big) \lesssim \frac{\kappa_1}{\sqrt{\gamma_1}},$$
which combined \eqref{eq:lemtotilde} proves the claim. In Remark \ref{rem:easycutoff} below, we explain why this observation is sufficient to show that the Bernoulli--Laplace chain with $k$ swaps exhibits cutoff for $k=n^{o(1)}$ but it appears that the more delicate treatment of Lemma \ref{lem:hitting} and Lemma \ref{lem:hoeffding} is needed for asymptotically larger values of $k$, e.g., to preserve the non-negativity of the supermartingale $(Z_t)$.
\end{remark}


\subsection{Getting at distance $o\big(\sqrt{k}\big)$} \label{subsec:3.3} Let $(X_t)$ and $(Y_t)$ be two instances of the Bernoulli--Laplace chain with $k=o(n)$ swaps. In this subsection we will show that if $(X_t)$ and $(Y_t)$ are coupled according the coupling of Subsection \ref{subsec:2.1} after the stopping time $\tau_2(\kappa_2)$ of Proposition \ref{prop:tau2} then with high probability the difference $|X_t-Y_t|$ will become asymptotically smaller than $\sqrt{k}$ within $\frac{n}{k}\log\log n$ steps.

\begin{lemma} \label{lem:tau3}
Let $(X_t)$ and $(Y_t)$ be two instances of the Bernoulli--Laplace chain with $k$ swaps, coupled as in Subsection \ref{subsec:2.1}, such that $X_0, Y_0\in\big(\frac{n}{2} - \kappa_2\sqrt{n},\frac{n}{2}+\kappa_2\sqrt{n}\big)$ and $|X_0-Y_0|\leq \kappa_2\sqrt{k\log n}$ for some $\kappa_2\in(0,\infty)$. For $\kappa_3\in(0,\infty)$, consider the stopping time
\begin{equation} \label{eq:tau3}
\tau_3(\kappa_3) \eqdef \min\Big\{t: \ |X_t-Y_t|\leq \frac{\sqrt{k}}{\log\log n} \ \ \mbox{and} \ \ X_t, Y_t\in\Big(\frac{n}{2}-\kappa_3\sqrt{n}(\log n)^2, \frac{n}{2}+\kappa_3\sqrt{n}(\log n)^2\Big)\Big\}.
\end{equation}
Then, we have
\begin{equation} \label{eq:boundtau3}
\mb{P}\Big(\tau_3(\kappa_3)>\frac{n}{k}\log\log n \ \Big| \ X_0,Y_0\Big) \lesssim \frac{\kappa_2^2}{\kappa_3^2}.
\end{equation}
\end{lemma}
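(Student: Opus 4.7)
The plan is to combine the two preliminary estimates --- the monotone coupling bound of Proposition \ref{prop:coupling} and the Doob-type maximal inequality \eqref{eq:loglog} of Proposition \ref{prop:doob} --- via a union bound. Set $t^\star \eqdef \frac{n}{k}\log\log n$, and introduce two bad events: the \emph{exit event}
\begin{equation*}
B_1 \eqdef \bigcup_{t\in[0,t^\star]}\Big\{\Big|X_t-\tfrac{n}{2}\Big|>\kappa_3\sqrt{n}(\log n)^2\Big\}\cup\bigcup_{t\in[0,t^\star]}\Big\{\Big|Y_t-\tfrac{n}{2}\Big|>\kappa_3\sqrt{n}(\log n)^2\Big\},
\end{equation*}
and the \emph{non-coalescence event} $B_2\eqdef \{\tau_{\mr{couple}}(\sqrt{k}/\log\log n)>t^\star\}$. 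On the complement $B_1^{\msf{c}}\cap B_2^{\msf{c}}$, the coupling time $\tau_{\mr{couple}}(\sqrt{k}/\log\log n)$ is at most $t^\star$, and at that instant both chains lie in the prescribed box, so the two defining conditions of $\tau_3(\kappa_3)$ are met simultaneously, yielding $\tau_3(\kappa_3)\leq t^\star$.

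It then remains to bound the probabilities of $B_1$ and $B_2$. For $B_1$, the hypothesis $\big|X_0-\tfrac{n}{2}\big|\vee\big|Y_0-\tfrac{n}{2}\big|\leq \kappa_2\sqrt{n}$ allows a direct application of \eqref{eq:loglog} with $A=\kappa_2$, $r=\kappa_3\sqrt{n}(\log n)^2$, and $t_0=0$, applied to $(X_t)$ and $(Y_t)$ in turn. Each contribution is of order $\kappa_2^2 n(\log n)^4/\big(\kappa_3^2 n(\log n)^4\big)=\kappa_2^2/\kappa_3^2$, so $\mb{P}(B_1\mid X_0,Y_0)\lesssim\kappa_2^2/\kappa_3^2$. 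For $B_2$, Proposition \ref{prop:coupling} with $r=\sqrt{k}/\log\log n$ and $t=t^\star$, combined with the estimate $\big(1-\tfrac{2k(n-k)}{n^2}\big)^{t^\star}\leq (\log n)^{-2+o(1)}$ (which uses $k=o(n)$) and the hypothesis $|X_0-Y_0|\leq\kappa_2\sqrt{k\log n}$, gives
\begin{equation*}
\mb{P}(B_2\mid X_0,Y_0)\lesssim (\log n)^{-2+o(1)}\cdot\frac{\kappa_2\sqrt{k\log n}\,\log\log n}{\sqrt{k}} = \kappa_2(\log n)^{-3/2+o(1)}\log\log n,
\end{equation*}
which is $o(1)$ as $n\to\infty$ and in particular is dominated by $\kappa_2^2/\kappa_3^2$ in the regime of fixed $\kappa_2,\kappa_3$. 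Adding the two bounds produces the claimed estimate \eqref{eq:boundtau3}.

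There is no substantive obstacle in this step: the window $\kappa_3\sqrt{n}(\log n)^2$ in the definition of $\tau_3(\kappa_3)$ is precisely calibrated so that the $n(\log n)^4$ factor on the right-hand side of \eqref{eq:loglog} yields the target rate $\kappa_2^2/\kappa_3^2$, while the coupling-contraction factor $1-2k(n-k)/n^2 \approx 1-2k/n$ raised to the power $\frac{n}{k}\log\log n$ comfortably beats the initial separation of size $\kappa_2\sqrt{k\log n}$. The argument is essentially a bookkeeping combination of the tools already developed in Section \ref{sec:2}, which is why, in contrast to the preceding subsection, the coupling here can be taken to be the monotone combinatorial coupling of Subsection \ref{subsec:2.1} rather than an independent one.
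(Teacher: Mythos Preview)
Your proposal is correct and follows essentially the same route as the paper's proof: both decompose $\{\tau_3(\kappa_3)>t^\star\}$ into the non-coalescence event (controlled by Proposition~\ref{prop:coupling}) and the exit event (controlled by \eqref{eq:loglog}), then take a union bound. The only cosmetic difference is that the paper uses the cruder inequality $1-\tfrac{2k(n-k)}{n^2}\leq 1-\tfrac{k}{n}$ to obtain $\mb{P}(B_2)\leq \kappa_2(\log\log n)/\sqrt{\log n}$, whereas you extract the sharper $(\log n)^{-2+o(1)}$ decay; either estimate is $o(1)$ and is then absorbed (for fixed $\kappa_2,\kappa_3$) into the $\kappa_2^2/\kappa_3^2$ term.
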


\begin{proof}
Recall that we assume that $k\leq\frac{n}{2}$. Following the notation \eqref{eq:taur} of Proposition \ref{prop:coupling}, let
$$\tau_\mr{couple}\Big(\frac{\sqrt{k}}{\log\log n}\Big) \eqdef \min\Big\{t: \ |X_t-Y_t|\leq\frac{\sqrt{k}}{\log\log n}\Big\}.$$
Then, by  \eqref{eq:lemcoupling}, we know that
\begin{equation} \label{eq:boundthetaucouple}
\begin{split}
\mb{P}\Big(\tau_\mr{couple}\Big(\frac{\sqrt{k}}{\log\log n}\Big)>\frac{n}{k}\log\log n  \Big| X_0, & Y_0 \Big)  \leq\Big(1-\frac{2k(n-k)}{n^2}\Big)^{\frac{n}{k}\log\log n} \kappa_2\sqrt{\log n} \log\log n \\ & \leq \Big(1-\frac{k}{n}\Big)^{\frac{n}{k}\log\log n} \kappa_2 \sqrt{\log n} \log\log n \leq \frac{\kappa_2 \log\log n}{\sqrt{\log n}}.
\end{split}
\end{equation}
Also, notice that by the definition of $\tau_3(\kappa_3)$, we have
\begin{equation*} \label{eq:breaktau3}
\begin{split}
\Big\{ \tau_3(\kappa_3) > \frac{n}{k}\log\log n\Big\} \subseteq \Big\{\tau_\mr{couple}\Big(\frac{\sqrt{k}}{\log\log n}\Big) & > \frac{n}{k}\log\log n\Big\} \\ & \cup \bigcup_{t\in[0,\frac{n}{k}\log\log n]}\!\!\! \Big\{ \Big| X_t-\frac{n}{2}\Big|\vee\Big| Y_t-\frac{n}{2}\Big| > \kappa_3\sqrt{n}(\log n)^2 \Big\}.
\end{split}
\end{equation*}
Finally, by \eqref{eq:loglog}, we derive the estimate
$$\mb{P}\Big( \bigcup_{t\in[0,\frac{n}{k}\log\log n]}\!\!\! \Big\{ \Big| X_t-\frac{n}{2}\Big|\vee\Big| Y_t-\frac{n}{2}\Big| > \kappa_3\sqrt{n}(\log n)^2 \Big\}  \ \Big| \ X_0, Y_0\Big) \lesssim \frac{\kappa_2^2}{\kappa_3^2},$$
which along with the last inclusion and \eqref{eq:boundthetaucouple} imply \eqref{eq:boundtau3}.
\end{proof}

For technical purposes, it will be important to know that $|X_t-Y_t| =o(\sqrt{k})$ and $\big|X_t-\frac{n}{2}\big|\vee \big|Y_t-\frac{n}{2}\big| = O(\sqrt{n})$ simultaneously, instead of the weaker bound $\big|X_t-\frac{n}{2}\big|\vee \big|Y_t-\frac{n}{2}\big| = O(\sqrt{n}(\log n)^2)$ which was shown in Lemma \ref{lem:tau3}. This refinement is achieved in the following proposition.

\begin{proposition} \label{prop:tau4}
Let $(X_t), (Y_t)$ be two instances of the Bernoulli--Laplace chain with $k$ swaps, coupled as in Subsection \ref{subsec:2.1} such that $X_0, Y_0\in\big(\frac{n}{2} - \kappa_3\sqrt{n}(\log n)^2,\frac{n}{2}+\kappa_3\sqrt{n}(\log n)^2\big)$ and $|X_0-Y_0|\leq \frac{\sqrt{k}}{\log\log n}$ for some $\kappa_3\in(0,\infty)$. For $\kappa_4\in(0,\infty)$, consider the stopping time
\begin{equation} \label{eq:tau4}
\tau_4(\kappa_4) \eqdef \min\Big\{t: \ |X_t-Y_t|\leq \frac{\sqrt{k}}{\log\log n} \ \ \mbox{and} \ \ X_t, Y_t\in\Big(\frac{n}{2} - \kappa_4\sqrt{n}, \frac{n}{2} + \kappa_4\sqrt{n}\Big) \Big\}.
\end{equation}
Then, we have
\begin{equation} \label{eq:boundtau4}
\mb{P}\Big( \tau_4(\kappa_4)> \frac{2n}{k} \log\log n \ \Big| \ X_0, Y_0\Big) \lesssim \frac{1}{\kappa_4^2}.
\end{equation}
\end{proposition}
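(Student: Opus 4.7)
The plan is as follows. The decisive structural observation is that the coupling from Subsection~\ref{subsec:2.1} is monotone: by \eqref{eq:couplingismonotone}, $|X_t-Y_t|\leq|X_0-Y_0|\leq\sqrt{k}/\log\log n$ for every $t\geq 0$. Thus the distance-to-each-other condition appearing in the definition \eqref{eq:tau4} of $\tau_4(\kappa_4)$ holds automatically along the whole trajectory, and it suffices to prove that at the deterministic time $t_0\eqdef\frac{2n}{k}\log\log n$ both chains lie within $\kappa_4\sqrt{n}$ of $n/2$ with probability at least $1-O(1/\kappa_4^2)$. Consequently the proposition reduces to a Chebyshev estimate at the single time $t_0$, applied separately to $X_{t_0}$ and $Y_{t_0}$ and then combined via a union bound.

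To control the first moment, I would invoke Lemma~\ref{lem:expandvarident}: since $|X_0-n/2|\leq\kappa_3\sqrt{n}(\log n)^2$,
\[
\Big|\mb{E}(X_{t_0}|X_0)-\tfrac{n}{2}\Big|\leq \Big(1-\tfrac{2k}{n}\Big)^{t_0}\kappa_3\sqrt{n}(\log n)^2\leq e^{-4\log\log n}\kappa_3\sqrt{n}(\log n)^2=\kappa_3\sqrt{n}/(\log n)^2,
\]
which is $o(\sqrt{n})$ as $n\to\infty$.

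The more delicate step is to show $\mr{Var}(X_{t_0}|X_0)\lesssim n$. From \eqref{eq:f1f2}, $f_2(X_0)=\frac{2n-1}{2n-2}f_1(X_0)^2-\frac{1}{2n-2}$, and since $f_1(X_0)^2\leq 4\kappa_3^2(\log n)^4/n$, I get $|f_2(X_0)|\lesssim\kappa_3^2(\log n)^4/n$. Next, writing $f_2(k)=\frac{2n-1}{2n-2}(1-2k/n)^2-\frac{1}{2n-2}$, I observe that $f_2(k)\geq0$ when $k=o(n)$ (since $(1-2k/n)^2\to 1$) and that $f_2(k)\leq(1-2k/n)^2$ always; hence $0\leq f_2(k)^{t_0}\leq(1-2k/n)^{2t_0}\leq(\log n)^{-8}$. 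Plugging these bounds into the identity \eqref{eq:varianceident},
\[
\mr{Var}(X_{t_0}|X_0)\leq\frac{n^2}{4(2n-1)}+\frac{n^2(n-1)}{2(2n-1)}\cdot\frac{\kappa_3^2(\log n)^4}{n}\cdot\frac{1}{(\log n)^{8}}+\frac{n^2}{4}\cdot\frac{1}{(\log n)^{8}}\cdot\frac{4\kappa_3^2(\log n)^4}{n}\leq n
\]
for all sufficiently large $n$. Chebyshev's inequality then gives
\[
\mb{P}\Big(\big|X_{t_0}-\tfrac{n}{2}\big|>\kappa_4\sqrt{n}\,\Big|\,X_0\Big)\leq\mb{P}\Big(\big|X_{t_0}-\mb{E}(X_{t_0}|X_0)\big|>(\kappa_4-o(1))\sqrt{n}\Big)\lesssim\frac{1}{\kappa_4^2},
\]
and identically for $Y_{t_0}$; a union bound yields \eqref{eq:boundtau4}.

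The main obstacle, modest as it is, lies in the variance estimate: the starting point is allowed to be a full $\kappa_3\sqrt{n}(\log n)^2$ away from $n/2$, so the second-eigenvector contribution in \eqref{eq:varianceident} is \emph{a priori} as large as $n(\log n)^4$, and only the positivity of $f_2(k)$ for $k=o(n)$ together with the comparison $f_2(k)^{t_0}\leq(1-2k/n)^{2t_0}$ buys back the factor $(\log n)^{-8}$ needed to collapse everything to $O(n)$. Everything else is a painless application of monotonicity of the coupling and Chebyshev.
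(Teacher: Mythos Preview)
Your proof is correct and follows exactly the same route as the paper: monotonicity of the coupling \eqref{eq:couplingismonotone} reduces $\tau_4(\kappa_4)$ to the event that both chains lie within $\kappa_4\sqrt n$ of $n/2$ at the single deterministic time $t^*=\frac{2n}{k}\log\log n$, and then one controls expectation and variance and applies Chebyshev. The only difference is that you work harder than necessary on the variance: the paper simply invokes the universal bound $\mr{Var}(X_t\mid X_0)\le n$ for \emph{all} $t\ge0$ and \emph{all} $X_0$ (this drops out of the rewriting \eqref{eq:varproof1} together with $f_2(X_0)\ge-\tfrac{1}{2(n-1)}$ and $0\le(1-2k/n)^{2t}-f_2(k)^t\le1$), so the ``main obstacle'' you identify is not actually present.
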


\begin{proof}
Observe that, by the monotonicity property \eqref{eq:couplingismonotone} of the coupling, for every $t\geq0$ we have
\begin{equation*}
|X_t-Y_t|\leq \frac{\sqrt{k}}{\log\log n},
\end{equation*}
therefore
\begin{equation} \label{eq:tau4alternative}
\tau_4(\kappa_4) = \min\Big\{t: \ X_t, Y_t\in\Big(\frac{n}{2} - \kappa_4\sqrt{n}, \frac{n}{2} + \kappa_4\sqrt{n}\Big) \Big\}.
\end{equation}
Also, using \eqref{eq:expectationident} we get that for $t\geq\frac{2n}{k}\log\log n$,
\begin{equation}
\big| \mb{E}\big(X_t|X_0\big) - \frac{n}{2}\big| \leq \Big(1-\frac{2k}{n}\Big)^{\frac{2n}{k}\log\log n} \big|X_0-\frac{n}{2}\big| \leq \frac{\kappa_3 \sqrt{n}(\log n)^2}{(\log n)^4} = o\big(\sqrt{n}\big),
\end{equation}
and the same holds for $Y_t$. Combining this with \eqref{eq:tau4alternative} we deduce that
$$\mb{P}\Big(\tau_4(\kappa_4) > \frac{2n}{k}\log\log n \ \Big| \ X_0, Y_0\Big) \leq \mb{P} \Big( \big|X_{t^\ast}-\mb{E}\big(X_{t^\ast}\big| X_0\big) \big| \vee \big|Y_{t^\ast}-\mb{E}\big(Y_{t^\ast}\big| Y_0\big) \big| > \frac{1}{2}\kappa_4 \sqrt{n} \ \Big| \ X_0, Y_0\Big)$$
where $t^* = \frac{2n}{k}\log\log n$. Finally, it immediately follows from the proof of Lemma \ref{lem:expandvar} that also $\mr{Var}\big(X_t\big| X_0\big)\leq n$ for every $t\geq0$, thus by Chebyshev's inequality
$$\mb{P}\Big( \big| X_{t^\ast}-\mb{E}\big(X_{t^\ast}\big| X_0\big)\big| > \frac{1}{2}\kappa_4\sqrt{n} \ \Big| \ X_0\Big) \lesssim \frac{\mr{Var}(X_{t^\ast}|X_0)}{\kappa_4^2 n} \leq \frac{1}{\kappa_4^2},$$
and similarly for $(Y_t)$; thus \eqref{eq:boundtau4} follows.
\end{proof}

\begin{remark} \label{rem:easycutoff}
Recall that in Remark \ref{rem:weakbounds}, it was shown that if $X_0, Y_0 \in\big(\frac{n}{2}-\kappa_1\sqrt{n}, \frac{n}{2}+\kappa_1\sqrt{n}\big)$ for some $\kappa_1\in(0,\infty)$, then within $O\big(\frac{n}{k}\big)$ steps, the difference $|X_t-Y_t|$ will become at most $O(k)$. An argument identical to that of Lemma \ref{lem:tau3} shows that if $(X_t)$ and $(Y_t)$ are coupled according to the coupling of Subsection \ref{subsec:2.1} and $|X_0-Y_0|\lesssim k$, then within $O\big(\frac{n}{k}\log k\big)$ steps $t$, we will have $X_t=Y_t$ with high probability. By standard considerations (see, e.g. \cite[Chapter~5]{LPW}), this implies that the mixing time of the Bernoulli--Laplace chain with $k$ swaps is
\begin{equation}
t_{\mr{mix}} \leq \frac{n}{4k}\log n + O\Big(\frac{n}{k}\log k\Big).
\end{equation}
Therefore, this simple argument is sufficient to prove that the chain exhibits cutoff at $\frac{n}{4k}\log n$ with window of size $\frac{n}{k}\log k$ when $\log k = o(\log n)$ or, in other words, $k=n^{o(1)}$. Notice that the size of the window given by this approach is better than the one claimed in Theorem \ref{main} when $k \leq (\log n)^{O(1)}$.
\end{remark}


\subsection{Proof of Theorem \ref{main}} We are now in position to complete the proof of the main result of this article, namely Theorem \ref{main}. The simple main idea is the following. Fix $\kappa_4\in(0,\infty)$. If a starting point $X_0\in\big(\frac{n}{2}-\kappa_4\sqrt{n}, \frac{n}{2}+\kappa_4\sqrt{n}\big)$ is fixed, then the distribution of $X_1$ is approximately a Gaussian with mean $X_0 + o(\sqrt{k})$ and variance of the order of $k$. Similarly, under the same condition for $Y_0$, the distribution of $Y_1$ is approximately a Gaussian with mean $Y_0+o(\sqrt{k})$ and variance proportional to $k$. Therefore, if $|X_0-Y_0| = o(\sqrt{k})$, then $X_1$ and $Y_1$ have to be $o(1)$-close in total variation distance (see also Figures \ref{fig1} and \ref{fig2} below). To make this intuition precise, we first show a lemma based on the work \cite{DiaFree} of Diaconis and Freedman, where they studied the total variation distance of sampling with and without replacement.

\begin{lemma} \label{lem:diafree}
Let $X_0, Y_0 \in \big( \frac{n}{2}-\kappa_4\sqrt{n}, \frac{n}{2}+\kappa_4\sqrt{n}\big)$ for some $\kappa_4\in(0,\infty)$. Then,
\begin{equation} \label{eq:diafree}
\|X_1-Y_1\|_{\mr{TV}} \lesssim \|B_1-B_2\|_{\mr{TV}} + \kappa_4\sqrt{\frac{k}{n}},
\end{equation}
where $B_1\sim \mr{Bin}\big(k,\frac{1}{2}\big)$ and $B_2 - (X_0-Y_0) \sim \mr{Bin}\big(k,\frac{1}{2}\big)$.
\end{lemma}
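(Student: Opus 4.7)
The plan is to decompose $X_1$ and $Y_1$ using the single-step dynamics, couple the hypergeometric draws with symmetric binomials, and then exploit the translation invariance of the total variation distance. Writing out the update rule gives $X_1 = X_0 - R_L^X + R_R^X$, where conditionally on $X_0$ one has $R_L^X \sim \mr{Hyp}(n, X_0, k)$ and $R_R^X \sim \mr{Hyp}(n, n-X_0, k)$ independent, and analogously for $Y_1$.

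The first task is to couple $(R_L^X, R_R^X)$ with an independent pair $U, V \sim \mr{Bin}(k, 1/2)$ in such a way that the pairs disagree with probability $\lesssim \kappa_4 \sqrt{k/n}$. This is obtained in two stages: (i) the classical Diaconis--Freedman \cite{DiaFree} comparison between sampling with and without replacement couples $R_L^X$ to a $\mr{Bin}(k, X_0/n)$ variable, and $R_R^X$ to a $\mr{Bin}(k, 1 - X_0/n)$ variable, paying only $O(k/n)$ in total variation; (ii) a Hellinger-distance estimate yields
$$\|\mr{Bin}(k, X_0/n) - \mr{Bin}(k, 1/2)\|_{\mr{TV}} \lesssim |X_0/n - 1/2|\sqrt{k} \lesssim \kappa_4 \sqrt{k/n},$$
using the assumption $|X_0 - n/2| \leq \kappa_4 \sqrt n$, and symmetrically for the right urn. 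Letting $\widetilde X_1 \eqdef X_0 - U + V$ and analogously $\widetilde Y_1 \eqdef Y_0 - U' + V'$ with $U', V' \sim \mr{Bin}(k, 1/2)$ iid and independent of $(U,V)$, a maximal coupling gives $\|X_1 - \widetilde X_1\|_{\mr{TV}} + \|Y_1 - \widetilde Y_1\|_{\mr{TV}} \lesssim \kappa_4 \sqrt{k/n}$.

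The triangle inequality reduces the problem to bounding $\|\widetilde X_1 - \widetilde Y_1\|_{\mr{TV}}$. Let $\mu$ denote the common law of $V - U$ and $\tau_a$ the shift by $a$ on $\Z$. Since $\widetilde X_1 \sim \tau_{X_0}\mu$ and $\widetilde Y_1 \sim \tau_{Y_0}\mu$, translation invariance of TV gives
$$\|\widetilde X_1 - \widetilde Y_1\|_{\mr{TV}} = \|\mu - \tau_{Y_0 - X_0}\mu\|_{\mr{TV}}.$$
Writing $\mu = \nu * \check\nu$ with $\nu \eqdef \mr{Law}(\mr{Bin}(k, 1/2))$ and $\check\nu(j) \eqdef \nu(-j)$, the contractivity of TV under convolution with a probability measure yields
$$\|\mu - \tau_{Y_0 - X_0}\mu\|_{\mr{TV}} = \|(\nu - \tau_{Y_0 - X_0}\nu) * \check\nu\|_{\mr{TV}} \leq \|\nu - \tau_{Y_0 - X_0}\nu\|_{\mr{TV}} = \|B_1 - B_2\|_{\mr{TV}},$$
the final identity being immediate from the definitions of $B_1$ and $B_2$. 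Combining the estimates gives the lemma.

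The main point of delicacy is extracting the $\sqrt{k}$ (rather than the naive linear $k$) scaling in the binomial-parameter comparison via a Hellinger or KL estimate; this is precisely what produces the sharp $\kappa_4\sqrt{k/n}$ error rather than the weaker $\kappa_4 k/\sqrt n$, and is essential for the subsequent application of the lemma in the regime $k=o(n)$.
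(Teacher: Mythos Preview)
Your proof is correct and follows essentially the same route as the paper: decompose the one-step update into independent hypergeometric draws, use the Diaconis--Freedman bound to pass to binomials, then compare $\mr{Bin}(k,X_0/n)$ with $\mr{Bin}(k,1/2)$ at cost $\lesssim \kappa_4\sqrt{k/n}$, and finally reduce to the shifted-binomial comparison $\|B_1-B_2\|_{\mr{TV}}$. The only minor differences are that the paper cites \cite{Roos} for the $\sqrt{k}\,|p-1/2|$ bound where you invoke a Hellinger/KL estimate, and that your convolution-contraction argument (passing from the law of $V-U$ to that of a single binomial) makes explicit a step the paper simply asserts.
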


\begin{proof}
By the definition of the Bernoulli--Laplace chain, we have that
\begin{equation} \label{eq:diafree1}
X_1-X_0 = H_1 - H_2,
\end{equation}
where $H_1\sim\mr{Hyper}(n,n-X_0,k)$ is the number of red balls removed from the right urn and $H_2\sim\mr{Hyper}(n,X_0,k)$ is the number of red balls removed from the left urn and $H_2$ is independent of $H_1$. Similarly, $Y_1-Y_0 \sim H_3-H_4$, where $H_3\sim\mr{Hyper}(n,n-Y_0,k)$ and $H_4\sim\mr{Hyper}(n,Y_0,k)$ is independent of $H_3$. Consider binomial distributions $M_1 \sim \mr{Bin}\big(k, \frac{X_0}{n}\big)$, $M_2\sim\mr{Bin}\big(k,1-\frac{X_0}{n}\big)$, $M_3 \sim \mr{Bin}\big(k, \frac{Y_0}{n}\big)$ and $M_4\sim\mr{Bin}\big(k,1-\frac{Y_0}{n}\big)$. According to \cite[Theorem~(3)]{DiaFree}, for every $i\in\{1,2,3,4\}$,
\begin{equation} \label{eq:diafree2}
\|H_i - M_i\|_{\mr{TV}} \leq \frac{4k}{n}.
\end{equation}
Moreover, if $B\sim\mr{Bin}\big(k,\frac{1}{2}\big)$, it is well-known (see, e.g., \cite{Roos}) that
\begin{equation} \label{eq:diafree3}
\|M_1-B\|_{\mr{TV}} \lesssim \sqrt{k} \Big|\frac{1}{2}-\frac{X_0}{n}\Big| \lesssim \kappa_4 \sqrt{\frac{k}{n}},
\end{equation}
since $X_0\in\big(\frac{n}{2}-\kappa_4\sqrt{n}, \frac{n}{2}+\kappa_4\sqrt{n}\big)$ and the same holds for $M_2, M_3, M_4$. Combining \eqref{eq:diafree1}, \eqref{eq:diafree2} and \eqref{eq:diafree3}, we deduce that if $B_1\sim \mr{Bin}\big(k,\frac{1}{2}\big)$ and $B_2-(X_0-Y_0)\sim\mr{Bin}\big(k,\frac{1}{2}\big)$, then
$$\|X_1-Y_1\|_{\mr{TV}} \lesssim \|B_1-B_2\|_{\mr{TV}} +\frac{k}{n}+\kappa_4 \sqrt{\frac{k}{n}} \lesssim \|B_1-B_2\|_{\mr{TV}} + \kappa_4\sqrt{\frac{k}{n}},$$
as we wanted.
\end{proof}

\begin{center}
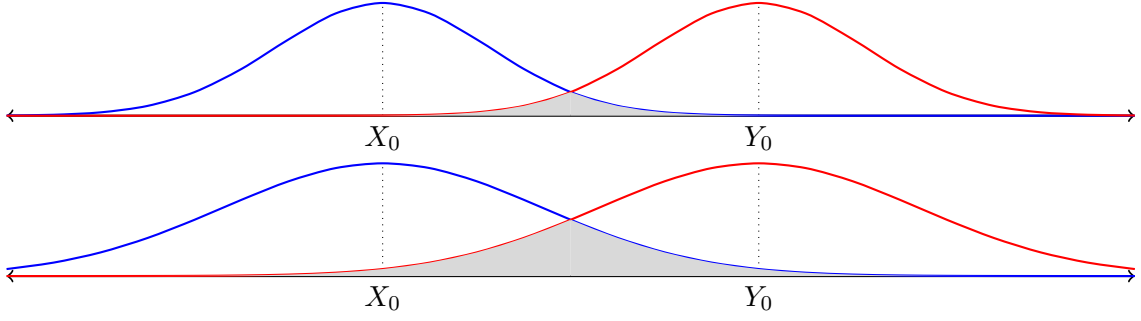
\begin{figure}
\begin{tikzpicture}
\draw[thick][<->] (0,0) -- (15,0) node[anchor=north] {};
\draw	(5,0) node[anchor=north] {$X_0$}
		(10,0) node[anchor=north] {$Y_0$};
\draw[dotted] (5,0) -- (5,1.5);
\draw[dotted] (10,0) -- (10,1.5);
    \draw[thick][scale=1,domain=0:15,smooth,variable=\x,blue] plot ({\x},{1.5*pow(e,-(\x-5)*(\x-5)/4)});
    \draw[thick][scale=1,domain=0:15,smooth,variable=\x,red] plot ({\x},{1.5*pow(e,-(\x-10)*(\x-10)/4)});

    \fill [gray!30, domain=7.5:15, variable=\x]
      (7.5, 0)
      -- plot ({\x}, {1.5*pow(e,-(\x-5)*(\x-5)/4)})
     -- (15, 0)
      -- cycle;

    \fill [gray!30, domain=0:7.5, variable=\x]
      (0, 0)
      -- plot ({\x}, {1.5*pow(e,-(\x-10)*(\x-10)/4)})
      -- (7.5, 0)
      -- cycle;

\end{tikzpicture}

\begin{tikzpicture}
\draw[thick][<->] (0,0) -- (15,0) node[anchor=north] {};
\draw	(5,0) node[anchor=north] {$X_0$}
		(10,0) node[anchor=north] {$Y_0$};
\draw[dotted] (5,0) -- (5,1.5);
\draw[dotted] (10,0) -- (10,1.5);
    \draw[thick][scale=1,domain=0:15,smooth,variable=\x,blue] plot ({\x},{1.5*pow(e,-(\x-5)*(\x-5)/9)});
    \draw[thick][scale=1,domain=0:15,smooth,variable=\x,red] plot ({\x},{1.5*pow(e,-(\x-10)*(\x-10)/9)});

    \fill [gray!30, domain=7.5:15, variable=\x]
      (7.5, 0)
      -- plot ({\x}, {1.5*pow(e,-(\x-5)*(\x-5)/9)})
      -- (15, 0)
      -- cycle;

    \fill [gray!30, domain=0:7.5, variable=\x]
      (0, 0)
      -- plot ({\x}, {1.5*pow(e,-(\x-10)*(\x-10)/9)})
      -- (7.5, 0)
      -- cycle;

\end{tikzpicture}
\caption{The approximate distributions of $X_t$ and $Y_t$ for $t=1,2$ if $|X_0-Y_0|\asymp\sqrt{k}$} \label{fig1}
\end{figure}
\begin{figure}
\begin{tikzpicture}
\draw[thick][<->] (0,0) -- (15,0) node[anchor=north] {};
\draw	(7,0) node[anchor=north] {$X_0$}
		(8,0) node[anchor=north] {$Y_0$};

    \draw[thick][scale=1,domain=0:15,smooth,variable=\x,blue] plot ({\x},{1.5*pow(e,-(\x-7)*(\x-7)/16)});
    \draw[thick][scale=1,domain=0:15,smooth,variable=\x,red] plot ({\x},{1.5*pow(e,-(\x-8)*(\x-8)/16)});

    \fill [gray!30, domain=7.5:15, variable=\x]
      (7.5, 0)
      -- plot ({\x}, {1.5*pow(e,-(\x-7)*(\x-7)/16)})
      -- (15, 0)
      -- cycle;

    \fill [gray!30, domain=0:7.5, variable=\x]
      (0, 0)
      -- plot ({\x}, {1.5*pow(e,-(\x-8)*(\x-8)/16)})
      -- (7.5, 0)
      -- cycle;

\draw[dotted] (7,0) -- (7,1.5);
\draw[dotted] (8,0) -- (8,1.5);
\end{tikzpicture}
\caption{The approximate distributions of $X_1$ and $Y_1$ if $|X_0-Y_0|=o\big(\sqrt{k}\big)$}\label{fig2}
\end{figure}
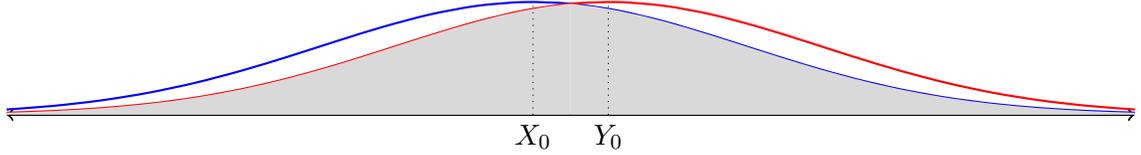
\end{center}

We will also need the following simple computation.

\begin{lemma} \label{lem:binom}
Let $B_1, B_2$ be two random variables such that $B_1 \sim \mr{Bin}\big(k,\frac{1}{2}\big)$ and $B_2-g(k) \sim\mr{Bin}\big(k,\frac{1}{2}\big)$ for some function $g(k)\in\mathbb{Z}$ with $g(k) = o\big(\sqrt{k}\big)$ as $k\to\infty$. Then,
\begin{equation}
\|B_1-B_2\|_{\mr{TV}} = o(1), \ \ \ \mbox{as} \ k\to\infty.
\end{equation}
\end{lemma}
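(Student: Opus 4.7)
Since $B_2 \simd B_1 + g(k)$, the plan is to control the total variation distance between a Binomial and its integer translate by a quantity $g(k) = o(\sqrt{k})$. Without loss of generality assume $g(k) \ge 0$ (otherwise swap the roles of $B_1$ and $B_2$). Setting $p_j \eqdef \binom{k}{j}/2^k$, we write
\begin{equation*}
2\|B_1 - B_2\|_{\mr{TV}} = \sum_{j \in \Z} \big|p_j - p_{j - g(k)}\big|,
\end{equation*}
with the convention $p_j = 0$ for $j \notin \{0,1,\ldots,k\}$.

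The first key step is a telescoping: for every $j \in \Z$,
\begin{equation*}
\big|p_j - p_{j-g(k)}\big| \le \sum_{i=0}^{g(k)-1} \big|p_{j-i} - p_{j-i-1}\big|,
\end{equation*}
so that, after exchanging the order of summation,
\begin{equation*}
2\|B_1 - B_2\|_{\mr{TV}} \le g(k) \sum_{j \in \Z} \big|p_j - p_{j-1}\big|.
\end{equation*}

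The second key step is to evaluate $\sum_j |p_j - p_{j-1}|$ exactly using the unimodality of $(p_j)_{j=0}^k$. Since this sequence is nondecreasing for $j \le \lfloor k/2 \rfloor$ and nonincreasing for $j \ge \lfloor k/2 \rfloor$, the absolute differences telescope into
\begin{equation*}
\sum_{j \in \Z} \big|p_j - p_{j-1}\big| = 2\max_{0 \le j \le k} p_j - p_0 - p_k \le 2 \binom{k}{\lfloor k/2 \rfloor}\!\Big/2^k.
\end{equation*}
By Stirling's formula, the right-hand side is of order $1/\sqrt{k}$. Combining the two displays gives
\begin{equation*}
\|B_1 - B_2\|_{\mr{TV}} \lesssim \frac{g(k)}{\sqrt{k}} = o(1),
\end{equation*}
as claimed.

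The argument is essentially routine; the only observation worth highlighting is the use of unimodality to turn the sum of first differences into $2\max_j p_j$, which avoids any appeal to the local central limit theorem. An alternative would be a direct CLT-style comparison of $B_1$ and $B_1 + g(k)$ to Gaussians $\mathcal{N}(k/2, k/4)$ and $\mathcal{N}(k/2 + g(k), k/4)$, whose total variation distance is $O(g(k)/\sqrt{k})$; however, making such an approximation quantitative in TV is more cumbersome than the elementary telescoping above.
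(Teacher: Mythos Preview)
Your proof is correct and takes a genuinely different route from the paper's. The paper observes that the ratio $\mu(x)/\nu(x)$ is monotone, so there is a single crossing point $x^\ast$ and hence $\|B_1-B_2\|_{\mr{TV}} = \mb{P}\big(x^\ast-g(k)<\mr{Bin}(k,\tfrac12)\le x^\ast\big)$; unimodality then bounds this by the probability that $\mr{Bin}(k,\tfrac12)-\tfrac{k}{2}$ lies in the centered window $(-g(k)/2,\,g(k)/2]$, and the conclusion follows from the central limit theorem. Your telescoping argument avoids the CLT entirely, replacing it by Stirling's estimate for the maximal point mass, and produces the explicit rate $O(g(k)/\sqrt{k})$ rather than a bare $o(1)$; it is also more robust, applying verbatim to any unimodal law whose largest atom is $O(1/\sqrt{k})$. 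The paper's approach, on the other hand, makes the interpretation of the total variation distance as ``probability of a window of length $g(k)$'' more transparent. One minor correction: when the sum $\sum_{j\in\Z}|p_j-p_{j-1}|$ is taken over all of $\Z$ (as it must be for the telescoping step), the boundary terms $|p_0-0|$ and $|0-p_k|$ contribute as well, and the sum equals $2\max_j p_j$ rather than $2\max_j p_j - p_0 - p_k$; since you only use the upper bound $2\max_j p_j$, this does not affect the argument.
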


\begin{proof}
Let 
$$\mu(x) \eqdef \binom{k}{x} \frac{1}{2^k}, \ \ x\in\{0,1,\ldots,k\}$$ 
be the law of $B_1$ and 
$$\nu(x)\eqdef \binom{k}{x-g(k)}\frac{1}{2^k}, \ \ x\in\{g(k),g(k)+1,\ldots,g(k)+k\}$$ 
be the law of $B_2$. A straightforward computation shows that there exists a point $x^\ast\in\{g(k),\ldots,k\}$ such that $\mu(x)\geq\nu(x)$ if and only if $x\leq x^\ast$. Therefore,
\begin{equation*}
\begin{split}
\|B_1-&B_2\|_{\mr{TV}} = \sum_{x=0}^{x^\ast} \big(\mu(x) - \nu(x)\big) = \mb{P}\Big(x^\ast-g(k)<\mr{Bin}\Big(k,\frac{1}{2}\Big) \leq x^\ast\Big) \\ & \leq \mb{P}\Big(-\frac{g(k)}{2}<\mr{Bin}\Big(k,\frac{1}{2}\Big)-\frac{k}{2} \leq \frac{g(k)}{2}\Big) = \mb{P}\Big(-\frac{g(k)}{\sqrt{k}}<\frac{\mr{Bin}\big(k,\frac{1}{2}\big)-\frac{k}{2}}{\sqrt{k}/2} \leq \frac{g(k)}{\sqrt{k}}\Big) = o(1),
\end{split}
\end{equation*}
where the first inequality follows from the unimodality of $\mr{Bin}\big(k,\frac{1}{2}\big)$ and the last equality follows from the central limit theorem, since $g(k)=o(\sqrt{k})$ as $k\to\infty$.
\end{proof}

\noindent{\it Proof of Theorem \ref{main}.}
We can clearly assume that $k\to\infty$ as $n\to\infty$ since the case $k=O(1)$ is covered by Remarks \ref{rem:weakbounds} and \ref{rem:easycutoff}. Let $(X_t)$ and $(Y_t)$ be two instances of the Bernoulli--Laplace chain with $X_0\in\{0,1,\ldots,n\}$ being fixed and $Y_0\sim \pi_n$. Combining Lemma \ref{lem:usevariance}, Proposition \ref{prop:tau2}, Lemma \ref{lem:tau3} and Proposition \ref{prop:tau4}, we see that if $\tau_4(\kappa_4)$ is defined by \eqref{eq:tau4}, then for every $\kappa_1,\kappa_2,\kappa_3, \kappa_4\in(0,\infty)$,
\begin{equation} \label{eq:usetau1-4}
\mb{P}\Big( \tau_4(\kappa_4)> \frac{n}{4k}\log n + \frac{3n}{k} \log\log n + \frac{\gamma_1 n}{k} \ \Big| \ X_0, Y_0\Big) \lesssim \frac{1}{\kappa_1^2} + \frac{\kappa_1}{\sqrt{\gamma_1}} + \frac{\kappa_1^2e^{2\gamma_1}}{\kappa_2} + \frac{\kappa_2^2}{\kappa_3^2} + \frac{1}{\kappa_4^2}.
\end{equation}
Choosing $\kappa_1 \asymp \gamma_1^{1/4}$, $\kappa_2 \asymp \kappa_1^2 e^{3\gamma_1}$, $\kappa_3 \asymp \kappa_2 e^{\gamma_1}$ and $\kappa_4\asymp\gamma_1$, we finally get
\begin{equation} \label{eq:chooseparameters}
\mb{P}\Big( \tau_4(\gamma_1)> \frac{n}{4k}\log n + \frac{3n}{k} \log\log n + \frac{\gamma_1 n}{k} \ \Big| \ X_0, Y_0\Big) \lesssim \frac{1}{\sqrt{\gamma_1}}+\frac{1}{\sqrt[4]{\gamma_1}} + \frac{1}{e^{\gamma_1}} + \frac{1}{\gamma_1^2} \lesssim \frac{1}{\sqrt[4]{\gamma_1}}.
\end{equation}
Moreover, Lemmas \ref{lem:diafree} and \ref{lem:binom} imply that
\begin{equation} \label{eq:o(1)}
\|X_{\tau_4(\gamma_1)+1} - Y_{\tau_4(\gamma_1)+1}\|_{\mr{TV}} = o(1)
\end{equation}
as $n\to\infty$, therefore a combination of \eqref{eq:chooseparameters} and \eqref{eq:o(1)} yields that for every $x\in\{0,1,\ldots,n\}$,
\begin{equation}
t\geq\frac{n}{4k}\log n + \frac{3n}{k}\log\log n + \frac{\gamma_1 n}{k} \ \ \ \Longrightarrow \ \ \ \big\|P_t^x - \pi_n \big\|_{\mr{TV}} \lesssim \frac{1}{\sqrt[4]{\gamma_1}},
\end{equation}
or, $t_{\mr{mix}}(\e) \leq \frac{n}{4k}\log n+ \frac{3n}{k}\log\log n + O\big(\frac{n}{\e^4k}\big)$, which completes the proof of Theorem \ref{main}.
\hfill$\Box$

\smallskip

\subsection*{Acknowledgements.} We are very grateful to Allan Sly for many helpful discussions.


\bibliographystyle{alpha}
\bibliography{cutoffbl} 

\end{document}